\documentclass[11pt,reqno]{amsart}
\usepackage{amsmath,amsfonts,amssymb,amsthm,amscd,comment,euscript}
\usepackage[all]{xy}
\usepackage{graphicx}
\usepackage{mathptmx}
\usepackage{enumerate} 
\usepackage[colorlinks=true]{hyperref} 
\usepackage{enumitem}

\xymatrixcolsep{1.9pc}                          
\xymatrixrowsep{1.9pc}
\newdir{ >}{{}*!/-5pt/\dir{>}}                  

\addtolength{\textwidth}{3cm} \calclayout

\swapnumbers
\theoremstyle{plain}
\newtheorem{lem}{Lemma}[section]
\newtheorem{cor}[lem]{Corollary}
\newtheorem{prop}[lem]{Proposition}
\newtheorem{thm}[lem]{Theorem}

\newtheorem*{conj}{Conjecture}

\theoremstyle{definition}
\newtheorem{ex}[lem]{Example}
\newtheorem{rem}[lem]{Remark}
\newtheorem{dfn}[lem]{Definition}

\renewcommand{\phi}{\varphi}
\renewcommand{\leq}{\leqslant}
\renewcommand{\geq}{\geqslant}
\renewcommand{\epsilon}{\varepsilon}

\renewcommand{\kappa}{\varkappa}

\DeclareMathOperator{\zar}{Zar}

\DeclareMathOperator{\spec}{Spec}
\DeclareMathOperator{\ind}{ind} 

 \DeclareMathOperator{\charr}{char}

\DeclareMathOperator{\Hom}{Hom} 
 \DeclareMathOperator{\id}{id}

 \DeclareMathOperator{\colim}{colim}
 
 \DeclareMathOperator{\Ab}{Ab}
 
 \DeclareMathOperator{\kr}{Ker}
 \DeclareMathOperator{\im}{Im}
\DeclareMathOperator{\coker}{Coker} \DeclareMathOperator{\nis}{Nis}

\newcommand {\lp}{\varinjlim}

\newcommand{\lra}[1]{\bl{#1}\longrightarrow\relax}
\newcommand{\bl}[1]{\buildrel #1\over}
\newcommand{\cc}{\mathcal}
\newcommand{\bb}{\mathbb}

\newcommand{\wh}{\widehat}
\newcommand{\wt}{\widetilde}

\newcommand{\gmpn}{\bb G_m^{\wedge n}}
\newcommand{\uhom}{\underline{\Hom}}


\begin{document}

\footskip30pt


\title{Semilocal Milnor K-theory}
\author{Grigory Garkusha}
\address{Department of Mathematics, Swansea University, Fabian Way, Swansea SA1 8EN, United Kingdom}
\email{g.garkusha@swansea.ac.uk}

\begin{abstract}
In this paper, semilocal Milnor $K$-theory of fields is introduced and studied. A strongly convergent spectral
sequence relating semilocal Milnor $K$-theory to semilocal motivic cohomology is constructed.
In weight 2, the motivic cohomology groups $H^p_{\zar}(k,\bb Z(2))$, $p\leq 1$, are computed
as semilocal Milnor $K$-theory groups $\wh{K}^M_{2,3-p}(k)$. The following applications are given:
(i) several criteria for the Beilinson--Soul\'e Vanishing Conjecture;
(ii) computation of $K_4$ of a field; (iii) the Beilinson conjecture for rational $K$-theory of fields of prime characteristic
is shown to be equivalent to vanishing of rational semilocal Milnor $K$-theory. 
\end{abstract}

\thanks{Supported by EPSRC grant EP/W012030/1}

\keywords{Milnor $K$-theory, motivic cohomology, algebraic $K$-theory of fields}

\subjclass[2010]{19D45, 19E15,14F42}


\maketitle

\thispagestyle{empty} \pagestyle{plain}

\newdir{ >}{{}*!/-6pt/@{>}} 

\tableofcontents

\section{Introduction}

It is a classical fact of algebraic $K$-theory of fields that Milnor $K$-groups $K_0^M,K_1^M,K_2^M$ agree
with Quillen's $K_0,K_1,K_2$. However, $K_n^M$ is only a small piece of Quillen's $K_n$
for $n\geq 3$. 
A key technical tool to make computations in algebraic $K$-theory is the motivic spectral sequence
   $$E_{p,q}^2=H_{\zar}^{q-p}(k,\bb Z(q))\Longrightarrow K_{p+q}(k)$$
relating algebraic $K$-theory to motivic cohomology (see, e.g.,~\cite{FS}).

By well-known theorems of Nesterenko--Suslin~\cite{NS} and Totaro~\cite{Tot} the Milnor
$K$-theory ring $K_*^M(k)$ is isomorphic to the ring $\bigoplus H^n_{\zar}(k,\bb Z(n))$. 
We also know that $H^1_{\zar}(k,\bb Z(2))=K_3^{\ind}(k)$, where $K_3^{\ind}(k)$ is the 
indecomposable $K$-theory of $k$. The other motivic cohomology groups are a complete mystery.
Their computation, and hence computation of algebraic $K$-theory, is one of the hardest problems in the field and several
outstanding conjectures are related to this problem. For instance, the celebrated Beilinson--Soul\'e Vanishing Conjecture
states that all motivic cohomology groups $H^p_{\zar}(k,\bb Z(q))$ vanish for $p\leq 0$ and $q>0$~\cite[\S3]{SV}. 
The finite coefficient motivic cohomology groups are much better understood due to the norm residue isomorphism 
theorem (formerly known as Milnor/Bloch--Kato Conjectures) relating the motivic cohomology to \'etale cohomology~\cite{V10}. 
The mysteries (such as the Beilinson--Soul\'e Vanishing Conjecture) mostly 
surround the $K$-theory and motivic cohomology with rational coefficients.
In positive characteristic,
the Beilinson conjecture states that Milnor $K$-theory and Quillen $K$-theory agree rationally:
   $$K_n^M(k)_{\bb Q}\lra{\cong} K_n(k)_{\bb Q}.$$
   
As we have mentioned above, Milnor $K$-theory is isomorphic to the motivic 
cohomology diagonal $\bigoplus H^n_{\zar}(k,\bb Z(n))$. The main purpose of this paper is
to introduce and investigate ``semilocal Milnor $K$-theory of fields". We show that it is precisely
related to motivic cohomology outside the diagonal. An advantage of the theory 
is that it is defined in elementary terms
whereas the motivic complexes  
are sophisticated and enormously hard for computations. All the definitions of 
the motivic complexes are strictly geometric, whereas the definition of 
semilocal Milnor $K$-theory is strictly algebraic (whenever the base field is infinite
-- see Remark~\ref{naive}).

By definition, semilocal Milnor $K$-theory of a field $k$ consists of bigraded Abelian groups
$\wh{K}_{n,m}^M(k)$, $m,n\geq 0$ (see Definition~\ref{maindef}). 
Precisely, let $\wh{\Delta}^\bullet_k$ be the cosimplicial scheme, where each
$\wh{\Delta}^\ell_k$ is the semilocalization of the standard affine scheme 
${\Delta}^\ell_k$ at its vertices $v_0,\ldots,v_\ell$ (see Definition~\ref{deltahat}).
Let $\cc K_n^M$ be the Zariski sheaf of Milnor $K$-theory in degree $n\geq 0$. 
Semilocal Milnor $K$-theory complex is 
the chain complex $\cc K_n^M(\wh{\Delta}^\bullet_k)$ and
   $$\wh{K}_{n,m}^M(k):=H_m(\cc K_n^M(\wh{\Delta}^\bullet_k)).$$
If $k$ is infinite, the complex $\cc K_n^M(\wh{\Delta}^\bullet_k)$ is 
defined naively in terms of generators and relations (see Remark~\ref{naive}). So
semilocal Milnor $K$-theory groups are defined as homology groups 
of complexes defined by generators and relations.

The main result of the paper, Theorem~\ref{glavnaya}, says that there is a strongly convergent spectral sequence
relating semilocal Milnor $K$-theory to semilocal motivic cohomology
   $$E^2_{pq}=H_{p}(H^{n-1-q}_{\zar}(\wh{\Delta}^\bullet_{k},\tau_{<n}\bb Z(n)))\Longrightarrow\wh{K}_{n,p+q+2}^M(k).$$
Here $\tau_{<n}\bb Z(n)$ is the truncation complex of $\bb Z(n)$ for degrees smaller than $n$ and
$H^{n-1-q}_{\zar}(\wh{\Delta}^\bullet_{k},\tau_{<n}\bb Z(n))$ is the chain complex obtained by the evaluation
of the $(n-1-q)$-th Zariski cohomology sheaf of the complex $\tau_{<n}\bb Z(n)$ at the cosimplicial semilocal 
scheme $\wh{\Delta}^\bullet_{k}$.
Moreover, if $n=2$, then the spectral sequence above collapses, and
hence for any $p\leq 1$ there is an isomorphism $H^p_{\zar}(k,\bb
Z(2))=\wh{K}_{2,3-p}^M(k)$. Thus semilocal Milnor $K$-theory is related to motivic cohomology exactly
outside the diagonal $\oplus H^n(k,\bb Z(n))$ in contrast with the classical Milnor $K$-theory. 
Another important and strong property of semilocal Milnor $K$-theory that distinguishes it, say, from motivic
cohomology and Quillen's $K$-theory is that it is invariant under purely transcendental extensions
(see Theorem~\ref{trancs}). The spectral sequence of Theorem~\ref{glavnaya} also implies that 
plenty of information is removed from motivic cohomology groups to get semilocal Milnor $K$-theory groups
(see Corollary~\ref{coker} as well).

Various applications of semilocal Milnor $K$-theory are given in the paper. First, several criteria for the 
Beilinson--Soul\'e Vanishing Conjecture are established in Theorem~\ref{bscriteria}. We next pass to computation
of $K_4$ of a field. The group $K_3$ was actively investigated in the 80-s -- see
Levine~\cite{L87,L89}, Merkurjev and Suslin~\cite{MS,Suslin90} (it is worth mentioning that
semilocal PID-s play an important role in their analysis). Recall that $K_3(k)$ fits into an exact sequence
   $$0\to K_3^M(k)\to K_3(k)\to K_3^{\ind}(k)\to 0.$$
We compute the group $K_3^{\ind}(k)$ as $\wh{K}_{2,2}^{M}(k)$
 in Corollary~\ref{Kind}, so that $K_3(k)$ is fully determined by Milnor $K$-theory
and semilocal Milnor $K$-theory. The latter computation is of independent interest. 
Similarly to $K_3(k)$ we show in Theorem~\ref{K4} that $K_4(k)$
is also fully determined by Milnor $K$-theory and semilocal Milnor $K$-theory.
These are somewhat surprising results.
As a whole, the relationship between the semilocal Milnor $K$-theory 
$\wh{K}_{*,*}^M(k)$ and algebraic $K$-theory $K_*(k)$ is of great interest.
Indeed, semilocal Milnor $K$-theory groups are homology groups of complexes
defined naively by generators and relations (if $k$ is infinite), 
whereas $K_*(k)$ is defined as homotopy groups of some 
infinite-dimensional space and thus seemingly very inaccessible to computations.


Another application is given for the Beilinson conjecture on the rational $K$-theory of fields of prime characteristic. 
Namely, it is shown in Theorem~\ref{beilinson} that this conjecture is equivalent to vanishing
of rational semilocal Milnor $K$-theory.
Also, vanishing of rational semilocal Milnor $K$-theory is shown to be a necessary condition for Parshin's conjecture
(see Theorem~\ref{parshin}).

In the final Section~\ref{concl} it is shown that in contrast with motivic cohomology groups with mod 2 coefficients, 
semilocal Milnor $K$-theory groups with $\mathbb Z/2$-coefficients $\wh{K}^M_{n,*}(k,\mathbb Z/2)$ 
are zero for any $n>1$.
This is another property of semilocal Milnor $K$-theory distinguishing it with the classical Milnor 
$K$-theory/motivic cohomology of fields. We also raise a conjecture in this section on 
rational contractibility of the logarithmic de 
Rham--Witt sheaves $W_r\Omega^n_{\log}$.

The author would like to thank Daniil Rudenko and Matthias Wendt for numerous
discussions on the Beilinson--Soul\'e Vanishing Conjecture. He also thanks Jean Fasel
and the anonymous referees for helpful comments. 

\section{Preliminaries}\label{deltahat}

Throughout the paper we denote by $Sm/k$ the category of smooth
separated schemes of finite type over a field $k$. By a smooth
semilocal scheme over $k$ we shall mean a $k$-scheme $W$ for which
there exists a smooth affine scheme $X\in Sm/k$ and a finite set
$x_1,\ldots,x_n$ of points of $X$ such that $W$ is the
inverse limit of open neighborhoods of this set. We will deal with both 
complexes for which the differential has degree $-1$ (chain complexes) 
and those for which the differential has degree $+1$ (cochain complexes).
By the homological shift of a chain complex $A$ we mean the chain complex $A[1]$
with $A[1]_n=A_{n+1}$ and differential $-d^A$. The author should stress
that the machinery of motivic homotopy theory is not used here (except
the proofs of Theorem~\ref{trancs} and Proposition~\ref{fdelta}), because
we often work with non-$\bb A^1$-invariant (pre)-sheaves having no transfers.

Following~\cite[\S2]{Suslin}, for any presheaf $\cc F : Sm/k\to\Ab$, let $\wt{C}_1\cc F$ denote
the following presheaf:
   $$\wt{C}_1\cc F(X)=\mathop{\lim_{\longrightarrow}}_{X\times\{0,1\}\subset U\subset X\times\bb A^1}\cc F(U).$$
There are two obvious presheaf homomorphisms (given by restrictions
to $X\times 0$ and $X\times 1$ respectively)
$i_0^*,i_1^*:\wt{C}_1\cc F\to\cc F$.

\begin{dfn}[\cite{Suslin, SV}]\label{ratcontr}
A presheaf $\cc F$ is said to be {\it rationally contractible\/} if there
exists a presheaf homomorphism $s:\cc F\to\wt{C}_1\cc F$ such that $i_0^*s=0$ and $i_1^*s=\id$.
\end{dfn}

\begin{ex}\label{primery}
(1) Given $n,l>0$, the Zariski sheaves with transfers $\mathbb Z_{tr}(\bb G_m^{\wedge n}):=Cor(-,\bb G_m^{\wedge n})$
and $\mathbb Z_{tr}(\bb G_m^{\wedge n})/l$ defined in~\cite[Section~3]{SV} are rationally contractible by~\cite[9.6]{SV}.

(2) Let $k$ be a perfect field of characteristic not 2. Then the presheaf with
Milnor--Witt correspondences 
$\wt{\mathbb Z}(\bb G_m^{\times n})/\wt{\mathbb Z}((1,\ldots,1)):=\wt{Cor}(-,\bb G_m^{\times n})/\wt{Cor}(-,(1,\ldots,1))$
in the sense of~\cite{CF} is
rationally contractible by~\cite[2.5]{BF}, and hence its direct summand 
$\wt{\mathbb Z}(\bb G_m^{\wedge n}):=\wt{Cor}(-,\bb G_m^{\wedge n})$ is (see the proof of~\cite[9.6]{SV} as well). 
$\wt{\mathbb Z}(\bb G_m^{\wedge n})$ is a Zariski sheaf by~\cite[5.2.4]{CF}.
\end{ex}

\begin{dfn}\label{deltahat}
Given a field $k$ and $\ell\geq 0$, let $\cc O(\ell)_{k,v}$ denote
the semilocal ring of the set $v=\{v_0=(1,0,\ldots,0),\ldots,v_n=(0,\ldots,0,1)\}$ of vertices of
$\Delta^\ell_{k}=\spec (k[t_0,\ldots,t_\ell]/(t_0+\cdots+t_\ell-1))$
and set
   $$\wh{\Delta}^\ell_{k}:=\spec\cc O(\ell)_{k,v}.$$
Then $\ell\mapsto\wh{\Delta}^\ell_{k}$ is a cosimplicial semilocal
subscheme of ${\Delta}^\bullet_{k}$.
\end{dfn}




\begin{prop}[Suslin~\cite{Suslin}]\label{facts}
The following statements are true:
\begin{itemize}
\item[$(1)$] Let $\cc F:Sm/k\to\Ab$ be a rationally contractible presheaf.
Then the presheaf $C_n(\cc F)=\uhom({\Delta}^n_{k},\cc F)$ is also
rationally contractible.
\item[$(2)$] Assume that the presheaf $\cc F$ is rationally contractible. Then
the complex $\cc F(\wh{\Delta}^\bullet_{k})$ is contractible,
and hence acyclic.
\end{itemize}
\end{prop}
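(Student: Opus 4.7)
The plan is to produce an explicit chain contraction of $\cc F(\wh{\Delta}^\bullet_k)$ for part (2), with part (1) following formally from the naturality of $s$. For part (1), I would define the contraction $s' : C_n(\cc F) \to \wt C_1 C_n(\cc F)$ by evaluating the given natural transformation $s$ at $X \times \Delta^n$ and comparing the resulting colimit over neighborhoods in $X \times \Delta^n \times \bb A^1$ with the target colimit over $U \times \Delta^n$ for $U \supset X \times \{0,1\}$. The conditions $i_0^* s' = 0$ and $i_1^* s' = \id$ then descend from the corresponding identities for $s$ by restriction to $\Delta^n \times \{0\}$ and $\Delta^n \times \{1\}$.

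For part (2), the geometric ingredient is the classical prism decomposition of $\Delta^n \times \bb A^1$ into $n+1$ affine $(n+1)$-simplices via affine morphisms $h_i : \Delta^{n+1} \to \Delta^n \times \bb A^1$, $i = 0, \ldots, n$, sending the vertex $w_j$ of $\Delta^{n+1}$ to $(v_j, 0)$ for $j \leq i$ and to $(v_{j-1}, 1)$ for $j > i$. The crucial observation is that each $h_i$ maps the vertices of $\Delta^{n+1}$ into the finite closed set $\{v_0, \ldots, v_n\} \times \{0,1\} \subset \wh{\Delta}^n \times \bb A^1$, which lies in any open neighborhood $W$ of $\wh{\Delta}^n \times \{0,1\}$. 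Since the only open of the semilocal scheme $\wh{\Delta}^{n+1}$ containing all its closed points is itself, each $h_i$ automatically factors through $W$; passing to the colimit over such $W$ produces well-defined pullbacks
\[
h_i^* : \wt C_1 \cc F(\wh{\Delta}^n) \to \cc F(\wh{\Delta}^{n+1}).
\]

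I would then define
\[
\sigma_n(\alpha) := \sum_{i=0}^n (-1)^i h_i^*(s(\alpha)) \in \cc F(\wh{\Delta}^{n+1})
\]
and verify the chain homotopy identity $d\sigma + \sigma d = \id$. This reduces to the familiar prism identities relating $h_i \circ \partial_j$ with other compositions of the form $\partial_k \circ h_\ell$, together with the extremal identities that $\partial_0 \circ h_0$ and $\partial_{n+1} \circ h_n$ equal the embeddings into $\wh{\Delta}^n \times \{0\}$ and $\wh{\Delta}^n \times \{1\}$ respectively. After the bulk of the double sum cancels in matched pairs, the residual extremal terms combine with $i_0^* s = 0$ and $i_1^* s = \id$ to yield $\id - 0 = \id$. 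Acyclicity of $\cc F(\wh{\Delta}^\bullet_k)$ is immediate from contractibility.

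The hardest part is the careful combinatorial sign bookkeeping in the prism identity, which is standard but tedious. The indispensable role of the semilocality of $\wh{\Delta}^{n+1}$ is that it forces the maps $h_i$ to factor through arbitrary opens $W$ in the colimit defining $\wt C_1 \cc F(\wh{\Delta}^n)$ — a property that would fail over the usual simplex $\Delta^{n+1}$, and which is precisely why the semilocal construction is the right geometric vehicle for the contraction.
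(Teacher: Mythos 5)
The paper offers no proof of this proposition: it is imported from Suslin~\cite{Suslin}, so the comparison is with Suslin's argument. Your part~(2) is essentially that argument, and it is correct. The prism maps $h_i:\Delta^{n+1}\to\Delta^n\times\bb A^1$ carry the vertices of $\Delta^{n+1}$ into the finite set $\{v_0,\ldots,v_n\}\times\{0,1\}$; hence for every open $U$ occurring in the colimit defining $\wt{C}_1\cc F(\wh{\Delta}^n)$ the preimage $h_i^{-1}(U)$ is an open neighbourhood of the closed points of $\wh{\Delta}^{n+1}$ and therefore contains $\wh{\Delta}^{n+1}$. This is exactly where semilocality is used, it yields the maps $h_i^*:\wt{C}_1\cc F(\wh{\Delta}^n)\to\cc F(\wh{\Delta}^{n+1})$, and the operator $\sigma_n=\sum(-1)^ih_i^*s$ together with the prism identities and $i_0^*s=0$, $i_1^*s=1$ gives the contraction. (Your extremal identifications are swapped --- with your conventions $h_0$ composed with the zeroth coface is the $1$-section and $h_n$ with the last coface is the $0$-section --- but this only affects the deferred bookkeeping, not the conclusion.)

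Part~(1), however, has a genuine gap as described. Evaluating $s$ at $X\times\Delta^n$ produces an element of $\wt{C}_1\cc F(X\times\Delta^n)=\colim_W\cc F(W)$, the colimit over \emph{all} open neighbourhoods $W$ of $X\times\Delta^n\times\{0,1\}$ in $X\times\Delta^n\times\bb A^1$, whereas the required target $\wt{C}_1C_n\cc F(X)=\colim_U\cc F(U\times\Delta^n)$ is the colimit over the subfamily of neighbourhoods of the special product form. The canonical comparison map therefore runs $\wt{C}_1C_n\cc F(X)\to\wt{C}_1\cc F(X\times\Delta^n)$ --- the wrong way for your construction --- and it is not invertible because the subfamily is not cofinal: $\Delta^n$ is not proper, so a neighbourhood of $X\times\Delta^n\times\{0,1\}$ need not contain any $U\times\Delta^n$. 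Concretely, for $X=\spec k$ and $n=1$ the complement of the curve $t(t-1)y=1$ in $\bb A^1_y\times\bb A^1_t$ contains $\bb A^1_y\times\{0,1\}$ but contains no subset of the form $\bb A^1_y\times U$ with $U$ open and dense in $\bb A^1_t$. So ``comparing the colimits'' does not define $s'$, and the identities $i_0^*s'=0$, $i_1^*s'=\id$ have nothing to descend to. This is not a cosmetic point for the paper: part~(1) is what guarantees that each term $Cor(\Delta^\ell_k\times-,\bb G_m^{\wedge n})$ of the complex~\eqref{Z(n)} is rationally contractible, which is the hypothesis under which part~(2) is applied termwise in Theorem~\ref{prespseq} and Theorem~\ref{glavnaya}. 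The statement is true, but it is precisely the content of \cite[2.4]{Suslin} and needs a genuine construction of a homotopy for $C_n\cc F$ rather than a formal transport of $s$.
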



If $\cc F^\bullet$ is a cochain complex, then the canonical truncation $\tau_{<0}\cc F^\bullet$ of $\cc F^\bullet$
has the property that $H^i(\tau_{<0}\cc F^\bullet)=H^i(\cc F^\bullet)$ for $i<0$ and $H^i(\tau_{<0}\cc F^\bullet)=0$ if $i\geq 0$.
If $\cc F^\bullet$ is a cochain complex of presheaves, we will write $\cc H^{-q}$ to denote the
$-q^{\textrm th}$ cohomology presheaf of the complex $\cc F^\bullet$.

By a tower in a triangulated category $\cc T$ we mean a sequence of maps 
   $$\cdots\xrightarrow{f_{q+2}}X_{q+1}\xrightarrow{f_{q+1}}X_q\xrightarrow{f_{q}}\cdots\xrightarrow{f_{1}}X_0$$
in $\cc T$. The $q$-th layer of the tower is an object $B_q\in\cc T$ fitting in a cofiber sequence
$X_{q+1}\xrightarrow{f_{q+1}}X_q\to B_q\to \Sigma X_{q+1}$ in $\cc T$.

The following result plays an important role in our analysis. It says that the zeroth cohomology of a non-positive cochain
complex of rationally contractible presheaves evaluated at $\wh{\Delta}^\bullet_{k}$ is recovered, up to homology,
from negative cohomology evaluated at $\wh{\Delta}^\bullet_{k}$.

\begin{thm}\label{prespseq}
Suppose $\cc F^\bullet$
   \begin{equation}\label{complex}
    \cdots\xrightarrow{d^{-3}}\cc F^{-2}\xrightarrow{d^{-2}}\cc F^{-1}\xrightarrow{d^{-1}}\cc F^0\to 0\to\cdots
   \end{equation}
is a cochain complex of rationally contractible presheaves
concentrated in non-positive degrees. Let $\cc L^{-n}:=\kr d^{-n}$, $n>0$,
and $\cc L:=\coker d^{-1}$. Then the chain complex of Abelian groups
$\cc L(\wh{\Delta}^\bullet_{k})$ is zig-zag quasi-isomorphic to the chain
complex $\cc L^{-1}(\wh{\Delta}^\bullet_{k})[-2]$. Moreover, there is a tower in the derived category
$\cc D(\Ab)$ of chain complexes of Abelian groups which are
concentrated in non-positive degrees
   \begin{equation}\label{tower}
     \cdots\lra{\alpha^{-3}}\cc L^{-3}(\wh{\Delta}^\bullet_{k})[-2]\lra{\alpha^{-2}}\cc L^{-2}(\wh{\Delta}^\bullet_{k})[-1]
     \lra{\alpha^{-1}}\cc L^{-1}(\wh{\Delta}^\bullet_{k})
   \end{equation}
with $q$-th layer, $q\geq 0$, being the complex $\cc
H^{-1-q}(\wh{\Delta}^\bullet_{k})[-q]$. In
particular, the tower~\eqref{tower} gives rise to a strongly
convergent spectral sequence
   $$E^2_{pq}=H_{p}(\cc H^{-1-q}(\wh{\Delta}^\bullet_{k})):=H_{p}(\cc H^{-1-q}(\tau_{<0}\cc F^\bullet)(\wh{\Delta}^\bullet_{k}))
       \Rightarrow H_{p+q+2}(\cc L(\wh{\Delta}^\bullet_{k})).$$
\end{thm}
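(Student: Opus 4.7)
The plan is to chop the complex $\cc F^\bullet$ into short exact sequences of presheaves and exploit the fact, provided by Proposition~\ref{facts}(2), that every $\cc F^{-i}(\wh{\Delta}^\bullet_{k})$ is contractible. For $i\geq 1$ let $\cc I^{-i}\subset\cc F^{-i+1}$ denote the \emph{presheaf} image of $d^{-i}$. Since limits of presheaves are computed pointwise, one has pointwise short exact sequences of presheaves
\begin{equation*}
0\to\cc K^{-i}\to\cc F^{-i}\to\cc I^{-i}\to 0,\qquad 0\to\cc I^{-i}\to\cc K^{-i+1}\to\cc H^{-i+1}\to 0,
\end{equation*}
together with $0\to\cc I^0\to\cc F^0\to\cc L\to 0$. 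Evaluation at $\wh{\Delta}^\bullet_{k}$ therefore produces short exact sequences of chain complexes.

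In the first type of sequence, Proposition~\ref{facts}(2) makes the middle term acyclic, so the associated distinguished triangle in $\cc D(\Ab)$ forces the connecting morphism to be a quasi-isomorphism $\cc I^{-i}(\wh{\Delta}^\bullet_{k})\simeq \cc K^{-i}(\wh{\Delta}^\bullet_{k})[-1]$. Applying the same principle to the sequence defining $\cc L$ and iterating yields
\begin{equation*}
\cc L(\wh{\Delta}^\bullet_{k})\simeq \cc I^0(\wh{\Delta}^\bullet_{k})[-1]\simeq \cc K^{-1}(\wh{\Delta}^\bullet_{k})[-2],
\end{equation*}
which is the first assertion of the theorem.

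Substituting the quasi-isomorphism $\cc I^{-i}(\wh{\Delta}^\bullet_{k})\simeq \cc K^{-i}(\wh{\Delta}^\bullet_{k})[-1]$ into the triangle coming from the second short exact sequence produces, for each $i\geq 1$, a distinguished triangle
\begin{equation*}
\cc K^{-i}(\wh{\Delta}^\bullet_{k})[-1]\to \cc K^{-i+1}(\wh{\Delta}^\bullet_{k})\to \cc H^{-i+1}(\wh{\Delta}^\bullet_{k})\to \cc K^{-i}(\wh{\Delta}^\bullet_{k})[-2].
\end{equation*}
A uniform homological shift then assembles these triangles into the tower~\eqref{tower}: the map $\alpha^{-n}$ is $\cc K^{-n-1}(\wh{\Delta}^\bullet_{k})[-n]\to \cc K^{-n}(\wh{\Delta}^\bullet_{k})[-n+1]$ with cofiber $\cc H^{-n}(\wh{\Delta}^\bullet_{k})[-n+1]$, which after the substitution $q=n-1$ is precisely the $q$-th layer $\cc H^{-1-q}(\wh{\Delta}^\bullet_{k})[-q]$. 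The spectral sequence of the theorem is the standard exact-couple spectral sequence associated with this tower; strong convergence holds because, as $q$ grows, the shift $[-q]$ pushes the support of the $q$-th layer out of any fixed homological window, so that only finitely many layers contribute to each bidegree.

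The only point requiring care is to remain strictly at the presheaf level throughout: the short exact sequences above hold pointwise only when $\cc I^{-i}$, $\cc H^{-i}$ and $\cc L$ are formed as presheaf image, cohomology and cokernel rather than their sheafifications. This is exactly what matches the input of Proposition~\ref{facts}(2), which is itself a presheaf statement evaluated at $\wh{\Delta}^\bullet_{k}$. Beyond this bookkeeping, the argument is a straightforward combination of two long exact sequences and their connecting morphisms.
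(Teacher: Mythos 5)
Your proof is correct and follows essentially the same route as the paper: splitting $\cc F^\bullet$ into presheaf-level short exact sequences, using Proposition~\ref{facts}(2) to kill the middle terms $\cc F^{-i}(\wh{\Delta}^\bullet_{k})$, chaining the resulting connecting quasi-isomorphisms, and invoking the increasing connectivity of the shifted layers for strong convergence (the paper cites \cite[6.1.1]{FS} for this last formal step). The only blemish is a harmless indexing slip: with your convention $\cc I^{-i}=\im d^{-i}\subset\cc F^{-i+1}$, the cokernel sequence and the displayed chain of quasi-isomorphisms should read $\cc I^{-1}$ rather than $\cc I^{0}$.
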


\begin{proof}
Consider a short exact sequence of presheaves
   $$0\to\im d^{-1}\to\cc F^0\to\cc L\to 0.$$
It induces a short exact sequence of Abelian groups in each degree
$n\geq 0$:
   $$0\to\lp_{U\ni v_0,\ldots,v_n}(\im d^{-1})(U)\to\lp_{U\ni v_0,\ldots,v_n}(\cc F^0)(U)\to\lp_{U\ni v_0,\ldots,v_n}\cc L(U)\to 0,$$
where $v_0,\ldots,v_n$ are the vertices of $\Delta^n_k$ and $U\subseteq\Delta^n_k$. We also use here the fact that
the direct limit functor is exact.
The latter is nothing but the short exact sequence
   $$0\to(\im d^{-1})(\wh{\Delta}^n_{k})\to(\cc F^0)(\wh{\Delta}^n_{k})\to\cc L(\wh{\Delta}^n_{k})\to 0.$$
Since $\cc F^0$ is rationally contractible by assumption, it follows
that the complex of Abelian groups $(\cc
F^0)(\wh{\Delta}^\bullet_{k})$ is contractible by
Proposition~\ref{facts}(2). Now the induced triangle in $\cc D(\Ab)$
   $$(\im d^{-1})(\wh{\Delta}^\bullet_{k})\to(\cc F^0)(\wh{\Delta}^\bullet_{k})
       \to\cc L(\wh{\Delta}^\bullet_{k})\lra{\tau}(\im d^{-1})(\wh{\Delta}^\bullet_{k})[-1]$$
yields a zig-zag quasi-isomorphism of chain complexes $\tau:\cc
L(\wh{\Delta}^\bullet_{k})\lra{\sim}(\im
d^{-1})(\wh{\Delta}^\bullet_{k})[-1]$. For the same reasons, $(\im
d^{-1})(\wh{\Delta}^\bullet_{k})$ is zig-zag quasi-isomorphic to $\cc
L^{-1}(\wh{\Delta}^\bullet_{k})[-1]$. For this, one uses the short
exact sequence of presheaves $\cc L^{-1}\hookrightarrow\cc
F^{-1}\twoheadrightarrow\im d^{-1}$. So $\cc
L(\wh{\Delta}^\bullet_{k})$ is zig-zag quasi-isomorphic to $\cc
L^{-1}(\wh{\Delta}^\bullet_{k})[-2]$.

Similarly, each short exact sequence of presheaves
   $$0\to\cc L^{-n}\lra{i_n}\cc F^{-n}\lra{p_n}\im d^{-n}\to 0$$
gives rise to a quasi-isomorphism of chain complexes 
$a_n:(\im d^{-n})(\wh{\Delta}^\bullet_{k})\simeq\cc L^{-n}(\wh{\Delta}^\bullet_{k})[-1]$.
It is induced by the map 
$b_n:\im d^{-n}\to\cc L^{-n}[-1]$ in $\cc D(\Ab)$ given by the zig-zag map of chain complexes
   $$\xymatrix{&\cc L^{-n}\ar[d]_{i_n}\ar[r]^\id&\cc L^{-n}\\
                       \im d^{-n}&\cc F^{-n}\ar[l]_{p_n}}$$
Here the chain complex in the middle is concentrated in degrees 1 and 0.

Next, each short exact sequence of presheaves
   $$0\to\im d^{-n-1}\lra{j_n}\cc L^{-n}\to\cc H^{-n}\to 0$$
gives rise to a triangle in $\cc D(\Ab)$
   $$\cc L^{-n-1}(\wh{\Delta}^\bullet_{k})[-1]\xrightarrow{}\cc L^{-n}(\wh{\Delta}^\bullet_{k})
       \to\cc H^{-n}(\wh{\Delta}^\bullet_{k})\to\cc L^{-n-1}(\wh{\Delta}^\bullet_{k}).$$
In this way we obtain the desired tower~\eqref{tower} with layers as
stated. Up to shift the morphism $\alpha^{-n}$ equals the composite map $j_n(\wh{\Delta}_k^\bullet)\circ a_n^{-1}$. 
Note that the $n$th complex of the tower $\cc
L^{-n-1}(\wh{\Delta}^\bullet_{k})[-n]$ is $(n-1)$-connected, and
hence the tower gives rise to a strongly convergent spectral
sequence
   $$E_2^{pq}=H_{p}(\cc H^{-1-q}(\wh{\Delta}^\bullet_{k}))\Rightarrow H_{p+q+2}(\cc L(\wh{\Delta}^\bullet_{k}))$$
after applying~\cite[6.1.1]{FS} to it. This completes the proof.
\end{proof}

\begin{rem}
Similarly to~\cite[6.1]{FS} the spectral sequence of the preceding theorem 
occurs from an exact couple (with maps $i,j,k$ of bidegrees $(1,-1),(0,0),
(-2,1)$ respectively) --- see~\cite[p.~798]{FS} for details.
\end{rem}

Let $\cc A$ be a $V$-category of correspondences on $Sm/k$ in the
sense of~\cite{GG} ($V$-categories are just a formal abstraction of
basic properties for the category of finite correspondences $Cor$).
We say that $\cc A$ is {\it nice\/} if for any smooth semilocal
scheme $W$ and any $\bb A^1$-invariant presheaf $\cc F$ with $\cc
A$-correspondences the canonical morphism of presheaves $\cc F\to\cc
F_{\zar}$ induces an isomorphism of Abelian groups $\cc
F(W)\lra{\cong}\cc F_{\zar}(W)$. Here $\cc F_{\zar}$ is the Zariski
sheaf associated to the presheaf $\cc F$. For example, the category
of finite correspondences $Cor$ is nice by~\cite[4.24]{Voe}. If the
base field $k$ is infinite perfect of characteristic different from
2, then the category of finite $MW$-correspondences in the sense
of~\cite{CF} is nice by~\cite[3.5]{BF}.

\begin{cor}\label{shvspseq}
Under the conditions of Theorem~\ref{prespseq} suppose
that~\eqref{complex} is a cochain complex of Zariski sheaves with
nice correspondences such that its presheaves $\cc L$ and $\cc
H^{-q}$-s are $\bb A^1$-invariant. Then the chain complex of Abelian
groups $\cc L_{\zar}(\wh{\Delta}^\bullet_{k})$ is quasi-isomorphic
to the chain complex of Abelian groups $\cc L^{-1}(\wh{\Delta}^\bullet_{k})[-2]=\cc
L^{-1}_{\zar}(\wh{\Delta}^\bullet_{k})[-2]$. Moreover, the $q$-th layer of the tower~\eqref{tower}
equals the complex $\cc
H^{-1-q}_{\zar}(\wh{\Delta}^\bullet_{k})[-q]$. Here $\cc
H^{-q}_{\zar}$ stands for the $-q^{\textrm th}$ cohomology Zariski
sheaf of the complex of Zariski sheaves~\eqref{complex}. In
particular, the tower~\eqref{tower} gives rise to a strongly
convergent spectral sequence
   $$E^2_{pq}=H_{p}(\cc H^{-1-q}_{\zar}(\wh{\Delta}^\bullet_{k})):=H_{p}(\cc H^{-1-q}_{\zar}(\tau_{<0}\cc F^\bullet)(\wh{\Delta}^\bullet_{k}))
       \Rightarrow H_{p+q+2}(\cc L_{\zar}(\wh{\Delta}^\bullet_{k})).$$
\end{cor}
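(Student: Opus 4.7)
The strategy is to apply Theorem~\ref{prespseq} to the complex $\cc F^\bullet$ viewed as a complex of (rationally contractible) presheaves, and then translate every occurrence of presheaf values on $\wh{\Delta}^\bullet_k$ into their Zariski sheafifications using the niceness hypothesis on the ambient category of correspondences.

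First I would observe that each scheme $\wh{\Delta}^n_k$ is smooth semilocal in the sense of the introduction, being the semilocalization of $\Delta^n_k$ at its vertices. Hence the niceness of the $V$-category $\cc A$ provides, for every $\bb A^1$-invariant presheaf with $\cc A$-correspondences $\cc F$, a canonical isomorphism $\cc F(\wh{\Delta}^n_k)\lra{\cong}\cc F_{\zar}(\wh{\Delta}^n_k)$. These isomorphisms assemble naturally in $n$ into an isomorphism of chain complexes $\cc F(\wh{\Delta}^\bullet_k)\cong\cc F_{\zar}(\wh{\Delta}^\bullet_k)$.

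Next, by assumption, $\cc L$ and each cohomology presheaf $\cc H^{-q}$ are $\bb A^1$-invariant. Since the category of presheaves with $\cc A$-correspondences is abelian and the terms of~\eqref{complex} live in it, the cokernel $\cc L$ and the cohomology presheaves $\cc H^{-q}$ inherit $\cc A$-correspondences. The preceding step therefore yields canonical identifications
$$\cc L(\wh{\Delta}^\bullet_k)\cong\cc L_{\zar}(\wh{\Delta}^\bullet_k),\qquad \cc H^{-q}(\wh{\Delta}^\bullet_k)\cong\cc H^{-q}_{\zar}(\wh{\Delta}^\bullet_k).$$
Furthermore, each $\cc K^{-n}=\kr d^{-n}$ is the kernel of a morphism of Zariski sheaves, hence is itself a Zariski sheaf and agrees with its sheafification on the nose; in particular $\cc K^{-n}(\wh{\Delta}^\bullet_k)=\cc K^{-n}_{\zar}(\wh{\Delta}^\bullet_k)$, with no $\bb A^1$-invariance needed.

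Finally I would invoke Theorem~\ref{prespseq} at the level of presheaves: it yields the quasi-isomorphism $\cc L(\wh{\Delta}^\bullet_k)\simeq\cc K^{-1}(\wh{\Delta}^\bullet_k)[-2]$, the tower~\eqref{tower} with $q$-th layer $\cc H^{-1-q}(\wh{\Delta}^\bullet_k)[-q]$, and the strongly convergent spectral sequence abutting to $H_{*}(\cc L(\wh{\Delta}^\bullet_k))$. Substituting the canonical identifications of the previous paragraph produces exactly the sheafified assertions of the corollary, including the target $\cc L_{\zar}(\wh{\Delta}^\bullet_k)$ and the $E^2$-page expressed via $\cc H^{-1-q}_{\zar}$. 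The main obstacle is purely bookkeeping: one must verify that $\cc L$ and the $\cc H^{-q}$, viewed abstractly as presheaves, are concretely presheaves with $\cc A$-correspondences so that niceness applies. This follows from the abelianness of the category of presheaves with $\cc A$-correspondences and the fact that kernels, images, cokernels, and cohomology computed therein agree with those in abelian presheaves; once this is granted, no new geometric input beyond Theorem~\ref{prespseq} and the niceness definition is required.
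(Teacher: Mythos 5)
Your proposal is correct and is exactly the argument the paper intends (the corollary is stated without proof as an immediate consequence of Theorem~\ref{prespseq}): apply the theorem at the presheaf level, note that each $\wh{\Delta}^n_k$ is a smooth semilocal scheme and that $\cc L$ and the $\cc H^{-q}$ are $\bb A^1$-invariant presheaves with $\cc A$-correspondences (being cokernels/subquotients in the abelian category of such presheaves), and use niceness to identify their sections on $\wh{\Delta}^\bullet_k$ with those of their Zariski sheafifications, while $\cc K^{-n}$ is already a sheaf as a kernel of a sheaf morphism. No gaps.
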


\section{Semilocal Milnor $K$-theory}

Let $\bb Z(n)$ be Suslin--Voevodsky's motivic complex of Zariski sheaves of weight
$n\geq 0$ on $Sm/k$ (see~\cite[Definition~3.1]{SV}). By definition, it is concentrated in
cohomological degrees $m\leq n$. More precisely, it equals the cochain complex with differential (of degree $+1$)
equal to the alternating sum of face operations
   \begin{equation}\label{Z(n)}
    \cdots\to Cor(\Delta^2_k\times-,\bb G_m^{\wedge n})\to Cor(\Delta^1_k\times-,\bb G_m^{\wedge n})\to Cor(-,\bb G_m^{\wedge n})\to 0\to\cdots
   \end{equation}
Here the Zariski sheaf $Cor(-,\bb G_m^{\wedge n})$ is in cohomological degree $n$. 
Basing on well-known theorems of Nesterenko--Suslin~\cite{NS} and Totaro~\cite{Tot},
define the {\it $n$-th Milnor $K$-theory sheaf\/} $\cc K_n^M$ as
the Zariski sheaf $\cc H^n_{\zar}(\bb Z(n))$.

Similarly, let $\wt{\bb Z}(n)$ be Calm\`es--Fasel's Milnor--Witt motivic
complex of Zariski sheaves of weight $n\geq 0$ on $Sm/k$ (see~\cite{CF}). More precisely, it equals the cochain
complex with differential (of degree $+1$) equal to the alternating sum of face operations
   \begin{equation}\label{tildeZ(n)}
    \cdots\to \wt{Cor}(\Delta^2_k\times-,\bb G_m^{\wedge n})\to\wt{Cor}(\Delta^1_k\times-,\bb G_m^{\wedge n})
    \to\wt{Cor}(-,\bb G_m^{\wedge n})\to 0\to\cdots
   \end{equation}
Denote by $\cc K_n^{MW}$ the Zariski sheaf $\cc H^n_{\zar}(\wt{\bb
Z}(n))$. We shall also refer to $\cc K_n^{MW}$ as the {\it $n$-th
Milnor--Witt $K$-theory sheaf}.

\begin{dfn}\label{maindef}
Let $k$ be any field and $n\geq 0$. The {\it $n$-th semilocal Milnor
$K$-theory complex of the field $k$\/} is the chain complex of
Abelian groups ${\cc K}_n^M(\wh{\Delta}^\bullet_{k})$.

The {\it $(n,q)$-th semilocal Milnor $K$-theory group
$\wh{K}_{n,q}^M(k)$ of $k$\/} is defined as the $q$-th homology
group $H_q({\cc K}_n^M(\wh{\Delta}^\bullet_{k}))$ of the $n$-th
semilocal Milnor $K$-theory complex of $k$. By definition,
$\wh{K}_{n,q}^M(k)=0$ for all $q<0$.

Let $k$ be an infinite perfect field of characteristic not 2 and
$n\geq 0$. The {\it $n$-th semilocal Milnor--Witt $K$-theory complex
of the field $k$\/} is the chain complex of Abelian groups ${\cc
K}_n^{MW}(\wh{\Delta}^\bullet_{k})$.

The {\it $(n,q)$-th semilocal Milnor--Witt $K$-theory group
$\wh{K}_{n,q}^{MW}(k)$ of $k$\/} is defined as the $q$-th homology
group $H_q({\cc K}_n^{MW}(\wh{\Delta}^\bullet_{k}))$ of the $n$-th
semilocal Milnor $K$-theory complex of $k$. By definition,
$\wh{K}_{n,q}^{MW}(k)=0$ for all $q<0$.

If $A$ is an Abelian group then the same definitions are given ``with
$A$-coefficients", in which case we just tensor the relevant complexes by $A$ to get
${\cc K}_n^M(\wh{\Delta}^\bullet_{k})\otimes A$ and ${\cc K}_n^{MW}(\wh{\Delta}^\bullet_{k})\otimes A$
and then semilocal Milnor and Milnor--Witt $K$-theory groups with $A$-coefficients
$\wh{K}_{n,*}^M(k,A)$, $\wh{K}_{n,*}^{MW}(k,A)$
are homology groups of these complexes. In what follows we mostly deal with the case $A=\bb Q$,
in which case we write the subscript~$\bb Q$. 

All statements which are proven below
with integer coefficients will automatically be true with $\bb Q$-coefficients. The interested reader 
will always be able to repeat the relevant proofs rationally (we do not write them for brevity).
Also, many statements are valid with any coefficients, say, when $A$ is finite. Since motivic 
cohomology with finite coefficients is well studied, we do not discuss this case either,
assuming that the interested reader will do this easily.
\end{dfn}

We also recall from~\cite{EVMS,Kerz,NS} that the $n$th Milnor $K$-group
$K^M_n(R)$ of a commutative ring $R$ is the abelian group generated
by symbols $\{a_1,\ldots,a_n\}$, $a_i\in R^\times$, $i =
1,\ldots,n$, subject to the following relations:

\begin{enumerate}
\item for any $i$, $\{a_1,\ldots,a_ia'_i,\ldots,a_n\}=\{a_1,\ldots,a_i,\ldots,a_n\}+\{a_1,\ldots,a'_i,\ldots,a_n\}$;
\item $\{a_1,\ldots,a_n\}=0$ if there exist $i,j$, $i\ne j$, such that $a_i+a_j =0$ or 1.
\end{enumerate}

\begin{rem}\label{naive}
If the field $k$ is infinite, it follows from~\cite{EVMS,Kerz} that
${\cc K}_n^M(\wh{\Delta}^\ell_{k})=K_n^M(\cc O(\ell)_{k,v})$. 
The $n$-th semilocal Milnor $K$-theory chain complex ${\cc
K}_n^M(\wh{\Delta}^\bullet_{k})$ is therefore isomorphic to the chain complex
$K_n^M(\cc O(\bullet)_{k,v})$. In particular,
$\wh{K}_{n,q}^{M}(k)=H_q(K_n^M(\cc O(\bullet)_{k,v}))$ for all
$n,q\geq 0$. We see that 
${\cc K}_n^M(\wh{\Delta}^\ell_{k})$ is defined naively in terms of generators and relations.
\end{rem}

\begin{lem}\label{weight01}
Given any field $k$, the complex ${\cc K}_0^M(\wh{\Delta}^\bullet_{k})$
has only one non-zero homology group in degree zero,
$\wh{K}_{0,0}^M(k)$, which is isomorphic to $\bb Z$.
\end{lem}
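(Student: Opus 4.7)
The plan is to identify ${\cc K}_0^M$ with the constant Zariski sheaf $\bb Z$ on $Sm/k$, verify that each $\wh\Delta^\ell_k$ is connected, and then compute the homology of the resulting universal chain complex directly.

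For the first step, $\cc K_0^M=\cc H^0_{\zar}(\bb Z(0))$ is the constant Zariski sheaf $\bb Z$. Indeed, the complex $\bb Z(0)$ of~\eqref{Z(n)} has terms $Cor(\Delta^n_k\times-,\spec k)=\bb Z^{\pi_0(-)}$ sitting in cohomological degree $-n$, and every coface map acts as the identity on $\pi_0$; the alternating-sum differentials therefore alternate between $\id$ and $0$, so that the only non-vanishing cohomology sheaf is $\cc H^0_{\zar}(\bb Z(0))=\bb Z$. Equivalently, since $K_0^M(R)=\bb Z$ for any connected commutative ring $R$, Remark~\ref{naive} directly identifies ${\cc K}_0^M(\wh\Delta^\ell_k)$ with $\bb Z$ on every connected semilocal scheme when $k$ is infinite.

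Next, I would check that $\wh\Delta^\ell_k=\spec\cc O(\ell)_{k,v}$ is connected for every $\ell\geq 0$. This is immediate: since $\Delta^\ell_k\cong\bb A^\ell_k$ is an integral scheme, the semilocal ring $\cc O(\ell)_{k,v}$ is a semilocal \emph{domain}, so $\wh\Delta^\ell_k$ contains a unique generic point and is connected. Hence $\cc K_0^M(\wh\Delta^\ell_k)=\bb Z$, and every coface map $\wh\Delta^{\ell-1}_k\to\wh\Delta^\ell_k$ induces the identity on $\bb Z$.

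Consequently, ${\cc K}_0^M(\wh\Delta^\bullet_k)$ coincides with the chain complex associated to the constant cosimplicial abelian group $\bb Z$, whose differential in degree $\ell$ equals $\sum_{i=0}^\ell(-1)^i\id_{\bb Z}$, which is $1$ when $\ell$ is even and $0$ when $\ell$ is odd. A direct calculation then yields $H_0=\bb Z$ and $H_q=0$ for all $q>0$, as claimed. The argument is essentially combinatorial and presents no serious obstacle; the only point to single out is connectedness of $\wh\Delta^\ell_k$, which follows at once from irreducibility of $\bb A^\ell_k$.
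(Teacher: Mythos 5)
Your argument is correct and is precisely the ``straightforward'' computation the paper leaves to the reader (its proof of Lemma~\ref{weight01} consists of the single line ``This is straightforward''): identify $\cc K_0^M=\cc H^0_{\zar}(\bb Z(0))$ with the constant sheaf $\bb Z$, use irreducibility of $\Delta^\ell_k\cong\bb A^\ell_k$ so that every open neighbourhood of the vertices is connected and hence $\cc K_0^M(\wh{\Delta}^\ell_k)=\bb Z$ with all face maps the identity, and then compute the homology of the resulting complex $\cdots\xrightarrow{1}\bb Z\xrightarrow{0}\bb Z\xrightarrow{1}\bb Z\xrightarrow{0}\bb Z\to 0$. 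All steps check out, including the alternative identification via Remark~\ref{naive} for infinite $k$.
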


\begin{proof}
This follows from the fact that the complex $\bb Z(0)$ is canonically quasi-isomorphic to the constant
sheaf $\bb Z$, positioned in degree 0 (see~\cite[3.2]{SV}).
\end{proof}

Recall from~\cite[2.3.1]{KS} that a presheaf with transfers $\cc F$
is {\it birationally invariant\/} if $\cc F(X)\lra{\simeq}\cc F(U)$ for any
dense open immersion $j:U\to X$. Birationally invariant homotopy
invariant presheaves with transfers are called {\it birational
sheaves}.

\begin{lem}\label{birat}
Given a birational sheaf $\cc F$ and an irreducible $X\in Sm/k$, the natural
morphism of complexes of Abelian groups
   $$\cc F(k(X))\to\cc F(\wh{\Delta}^\bullet_{k(X)})$$
is a quasi-isomorphism.
\end{lem}

\begin{proof}
By~\cite[4.1.3]{KL} $\cc F$ is a birational motivic sheaf.
By~\cite[p.~513]{KL} $\cc F(k(X))\to\cc
F(\wh{\Delta}^\bullet_{k(X)})$ is a quasi-isomorphism.
\end{proof}

We are now in a position to prove the main result of the paper.

\begin{thm}\label{glavnaya}
Suppose $k$ is any field. The following statements are true:

$(1)$ For any $n\geq 1$, $\wh{K}_{n,0}^M(k)=\wh{K}_{n,1}^M(k)=0$.

$(2)$ For any $n\geq 1$, there is a strongly convergent spectral
sequence
   $$E^2_{pq}=H_{p}(\cc H^{n-1-q}_{\zar}(\tau_{<n}\bb Z(n))(\wh{\Delta}^\bullet_{k}))
       \Longrightarrow\wh{K}_{n,p+q+2}^M(k),$$
where $\tau_{<n}\bb Z(n)$ is the truncation complex of $\bb Z(n)$ for degrees smaller than $n$.

$(3)$ If $n=2$ and $p\leq 1$, there is an isomorphism of Abelian
groups $H^p_{\zar}(k,\bb Z(2))=\wh{K}_{2,3-p}^M(k)$.

$(4)$ If the field $k$ is infinite perfect of characteristic
different from 2 and $n\geq 1$, then the natural morphism of chain
complexes of Abelian groups ${\cc
K}_n^{MW}(\wh{\Delta}^\bullet_{k})\to{\cc
K}_n^{M}(\wh{\Delta}^\bullet_{k})$ is a quasi-isomorphism. In
particular, it induces isomorphisms of Abelian groups
$\wh{K}_{n,q}^{MW}(k)\lra{\cong}\wh{K}_{n,q}^{M}(k)$ for all
$q\in\bb Z$.
\end{thm}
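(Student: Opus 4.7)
The plan is to apply Corollary~\ref{shvspseq} to the Suslin--Voevodsky motivic complex $\bb Z(n)$, shifted to live in non-positive cohomological degrees; parts (1) and (2) fall out directly, part (3) is a specialization to $n=2$, and part (4) is obtained by repeating the argument with the Milnor--Witt complex $\wt{\bb Z}(n)$ and comparing the resulting spectral sequences.

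First I will set $\cc F^\bullet := \bb Z(n)[n]$, so that $\cc F^0 = Cor(-,\bb G_m^{\wedge n})$ sits in degree zero and $\cc F^{-\ell} = Cor(\Delta^\ell_k\times -,\bb G_m^{\wedge n})$ for $\ell\geq 0$. Each term is rationally contractible by combining Example~\ref{primery}(1) with Proposition~\ref{facts}(1). The category $Cor$ is nice by~\cite[4.24]{Voe}, the sheafified cokernel $\cc L_{\zar} = \cc H^n_{\zar}(\bb Z(n)) = \cc K^M_n$ is $\bb A^1$-invariant, and the cohomology sheaves $\cc H^{i}_{\zar}(\bb Z(n))$ are $\bb A^1$-invariant as standard motivic cohomology sheaves. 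Corollary~\ref{shvspseq} then delivers the spectral sequence of (2) after identifying $\cc H^{n-1-q}_{\zar}(\bb Z(n)) = \cc H^{n-1-q}_{\zar}(\tau_{<n}\bb Z(n))$, which is valid because $n-1-q < n$ whenever $q\geq 0$. Part (1) is immediate from the same corollary's quasi-isomorphism $\cc K_n^M(\wh{\Delta}^\bullet_k) \simeq \cc K^{-1}_{\zar}(\wh{\Delta}^\bullet_k)[-2]$, which forces $\wh{K}^M_{n,q}(k) = H_{q-2}(\cc K^{-1}_{\zar}(\wh{\Delta}^\bullet_k)) = 0$ for $q\in\{0,1\}$ since $\cc K^{-1}_{\zar}(\wh{\Delta}^\bullet_k)$ is concentrated in non-negative homological degrees.

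For (3), specialize to $n=2$. The cohomology sheaves $\cc H^i_{\zar}(\bb Z(2))$ vanish on smooth semilocal $k$-schemes for $i\leq 0$ by weight-two Beilinson--Soul\'e vanishing (known, e.g.\ from the Lichtenbaum--Suslin--Voevodsky description of $\bb Z(2)$), so in the spectral sequence of (2) every column of the $E^2$-page with $q\geq 1$ is trivial, forcing collapse at $E^2$. The surviving column reads $E^2_{p,0} = H_p(\cc H^1_{\zar}(\bb Z(2))(\wh{\Delta}^\bullet_k))$; by $\bb A^1$-invariance of $\cc H^1_{\zar}(\bb Z(2))$ as a Nisnevich sheaf with transfers and the standard rigidity property of the cosimplicial scheme $\wh{\Delta}^\bullet_k$, this complex is concentrated in homological degree zero with value $\cc H^1_{\zar}(\bb Z(2))(k)=H^1_{\zar}(k,\bb Z(2))$. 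Re-indexing $p\mapsto 3-p$ then yields the advertised isomorphism $H^p_{\zar}(k,\bb Z(2)) = \wh{K}^M_{2,3-p}(k)$ for all $p\leq 1$ (with both sides vanishing for $p\leq 0$, consistent with Beilinson--Soul\'e).

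For (4), apply the same machinery verbatim to $\wt{\bb Z}(n)[n]$: rational contractibility of the terms follows from Example~\ref{primery}(2) combined with Proposition~\ref{facts}(1), niceness of the category of finite Milnor--Witt correspondences comes from~\cite[3.5]{BF}, and $\bb A^1$-invariance of the Milnor--Witt motivic cohomology sheaves is provided by the Calm\`es--Fasel theory. This produces a spectral sequence converging to $\wh{K}^{MW}_{n,*}(k)$ with the analogous $E^2$-page, and the canonical forgetful morphism $\wt{\bb Z}(n)\to\bb Z(n)$ induces a map between the two spectral sequences. The main obstacle will be to check that this map is an isomorphism on $E^2$-pages, i.e.\ that the induced maps $\cc H^{i}_{\zar}(\wt{\bb Z}(n))\to\cc H^{i}_{\zar}(\bb Z(n))$ on cohomology sheaves are isomorphisms in every degree $i<n$ when evaluated on smooth semilocal $k$-schemes. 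Granted this comparison below the top weight, the induced isomorphism on abutments gives the required quasi-isomorphism $\cc K^{MW}_n(\wh{\Delta}^\bullet_k)\to\cc K^{M}_n(\wh{\Delta}^\bullet_k)$, and in particular the claimed isomorphisms $\wh{K}^{MW}_{n,q}(k)\cong\wh{K}^M_{n,q}(k)$ for all $q$.
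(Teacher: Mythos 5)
Your treatment of (1) and (2) matches the paper's argument and is fine. The problem is in part (3): you justify the collapse by asserting that $\cc H^i_{\zar}(\bb Z(2))$ vanishes for $i\leq 0$ ``by weight-two Beilinson--Soul\'e vanishing (known)''. This is not known --- it is precisely an open case of the Beilinson--Soul\'e conjecture, to which the paper devotes an entire section of equivalent criteria; moreover the paper's Corollary~\ref{uniqdiv} and Theorem~\ref{K4} explicitly treat $H^p_{\zar}(k,\bb Z(2))=\wh{K}^M_{2,3-p}(k)$ for $p\leq 0$ as possibly non-zero (only uniquely divisible) groups, and the isomorphism you are proving would be vacuous for $p\leq 0$ if they vanished. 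The actual mechanism for collapse is different: one shows that \emph{every} sheaf $\cc H^p_{\zar}(\bb Z(2))$ with $p\leq 1$ is a \emph{birational} sheaf with transfers, because by the Cancellation Theorem $\uhom(\gmp,\cc H^p_{\zar}(\bb Z(2)))=\cc H^{p-1}(\bb Z(1))_{\zar}=0$ for $p\leq 1$ ($\bb Z(1)$ being acyclic in non-positive degrees), and then the argument of~\cite[4.2.1]{KL} shows $\cc H^p_{\zar}(\bb Z(2))(\wh{\Delta}^\bullet_k)$ is quasi-isomorphic to the constant complex $\cc H^p_{\zar}(\bb Z(2))(k)$ in homological degree $0$. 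Thus every column of the $E^2$-page is concentrated at $p=0$ (not zero), which forces collapse and gives $E^\infty_{0,q}=H^{1-q}_{\zar}(k,\bb Z(2))$. Note also that your fallback justification for the $q=0$ column --- ``$\bb A^1$-invariance and the standard rigidity property of $\wh{\Delta}^\bullet_k$'' --- is insufficient: $\cc K^M_n$ itself is an $\bb A^1$-invariant sheaf with transfers, yet $\cc K^M_n(\wh{\Delta}^\bullet_k)$ is very far from being constant (that is the whole point of the paper, cf.\ part (1)). Birationality is the essential extra input. Finally, the paper still has to reduce the general case to perfect base fields, finitely generated extensions and direct limits, which your write-up omits.

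In part (4) you correctly identify the key point --- that $\cc H^i_{\zar}(\wt{\bb Z}(n))\to\cc H^i_{\zar}(\bb Z(n))$ must be an isomorphism on the semilocal schemes for $i<n$ --- but you leave it as an assumption (``granted this comparison''). The paper closes this gap by quoting Bachmann's theorem that $\tau_{<n}\wt{\bb Z}(n)\to\tau_{<n}\bb Z(n)$ is a Nisnevich-local quasi-isomorphism, together with the identifications of Nisnevich and Zariski sections on $\wh{\Delta}^\ell_k$ from~\cite[3.5]{BF} and~\cite[5.5]{Voe}. Without this input the quasi-isomorphism $\cc K^{MW}_n(\wh{\Delta}^\bullet_k)\to\cc K^M_n(\wh{\Delta}^\bullet_k)$ is not established.
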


\begin{proof}
(1)-(2). By~Example~\ref{primery} the Zariski sheaf with transfers
$Cor(-,\bb G_m^{\wedge n})$, $n\geq 1$, is rationally contractible.
It follows from Proposition~\ref{facts} that the Zariski sheaf with
transfers $Cor(\Delta^\ell_k\times-,\bb G_m^{\wedge n})$ is
rationally contractible for every $\ell\geq 0$. Now the desired
spectral sequence of the second assertion follows from
Corollary~\ref{shvspseq} if we apply it to the cochain complex of
rationally contractible Zariski sheaves with transfers~\eqref{Z(n)}.
Another argument proving (2) is that there is an exact triangle
   $$0\to Tot(\tau_{<n}\bb Z(n)(\wh{\Delta}^\bullet_{k}))\to
       Tot(\bb Z(n)(\wh{\Delta}^\bullet_{k}))\to K^M_n(\wh{\Delta}^\bullet_{k})[-n]\to 0$$
using the fact that the complex in the middle is acyclic.

Next, by Theorem~\ref{prespseq} and Corollary~\ref{shvspseq} there is a
quasi-isomorphism ${\cc K}_n^M(\wh{\Delta}^\bullet_{k})\simeq\cc
L^{-1}_{\zar}(\wh{\Delta}^\bullet_{k})[-2]$ of chain complexes for
some presheaf with transfers $\cc L^{-1}$ (the shift is
homological). Assertion~(1) now follows.

(3). Suppose $k$ is perfect. Since the motivic complex of
weight one $\bb Z(1)$ is acyclic in non-positive cohomological
degrees by~\cite[3.2]{SV}, the proof of~\cite[4.34]{Voe} and
Voevodsky's Cancellation Theorem~\cite{V1} imply that
   $$\underline{\Hom}(\bb G_m^{\wedge 1},\cc H^p_{\zar}(\bb Z(2)))\cong(\cc H^p(\underline{\Hom}(\bb G_m^{\wedge 1},\bb Z(2))))_{\zar}=0$$
for all $p\leq 1$. It follows from~\cite[2.5.2]{KS} that each $\cc H^p_{\zar}(\bb
Z(2))$, $p\leq 1$, is a birational (Nisnevich) sheaf.

Lemma~\ref{birat} implies that the natural map of chain complexes
   $$\cc H^p_{\zar}(\bb Z(2))(k)\to\cc H^p_{\zar}(\bb Z(2))(\wh{\Delta}^\bullet_{k}),\quad p\leq 1,$$
is a quasi-isomorphism. Thus $H_i(\cc H^p_{\zar}(\bb
Z(2))(\wh{\Delta}^\bullet_{k}))=0$ for all $i\ne 0$ and $H_0(\cc
H^p_{\zar}(\bb Z(2))(\wh{\Delta}^\bullet_{k}))=\cc H^p_{\zar}(\bb
Z(2))(k)$. Using Corollary~\ref{shvspseq} applied to  the cochain complex of
rationally contractible Zariski sheaves with transfers~\eqref{Z(n)}, the $q$th layer $\cc
H^{-1-q}_{\zar}(\wh{\Delta}^\bullet_{k})[-q]$ of
the tower~\eqref{tower} is a complex having only one homology group
in degree $q$. Using induction in $q$ and applying homology functor
to each triangle $\cc
L^{-q-2}_{\zar}(\wh{\Delta}^\bullet_{k})[-q-1]\xrightarrow{}\cc
L^{-q-1}_{\zar}(\wh{\Delta}^\bullet_{k})[-q]\to\cc
H^{-1-q}_{\zar}(\wh{\Delta}^\bullet_{k})[-q]\lra{+}$ coming from the tower~\eqref{tower}, we get an
isomorphism of Abelian groups
$H^p_{\zar}(k,\bb Z(2))=\wh{K}_{2,3-p}^M(k)$ for any $p\leq 1$.



Next, suppose $K/k$ is a finitely generated field extension of the
perfect field $k$. Then $K=k(U)$ for some $U\in Sm/k$. Each scheme
$\wh{\Delta}^\ell_{K}$ is the semilocalization of
$\Delta^\ell_k\times U$ at the points
$(v_0,\eta),\ldots,(v_n,\eta)$, where $\eta$
is the generic point of $U$. 
Lemma~\ref{birat} implies that the natural map of chain complexes
   $$\cc H^p_{\zar}(\bb Z(2))(K)\to\cc H^p_{\zar}(\bb Z(2))(\wh{\Delta}^\bullet_{K}),\quad p\leq 1,$$
is a quasi-isomorphism. Thus $H_i(\cc H^p_{\zar}(\bb
Z(2))(\wh{\Delta}^\bullet_{K}))=0$ for all $i\ne 0$ and $H_0(\cc
H^p_{\zar}(\bb Z(2))(\wh{\Delta}^\bullet_{K}))=\cc H^p_{\zar}(\bb
Z(2))(K)$. As above, we get isomorphisms of Abelian groups
$H^p_{\zar}(K,\bb Z(2))=\wh{K}_{2,3-p}^M(K)$, $p\leq 1$, for any
finitely generated field extension $K/k$.


Finally, for any field $K$ of characteristic $p$ we follow the same argument as in~\cite[p.~244]{Suslin}.
We use the fact that it can be written as a directed limit $K=\lp_i K_i$ of fields finitely generated over
$\bb Z/p$, and the fact that the above homology/cohomology groups
commute with directed limits (we use~\cite[Lemma 3.9]{MVW}). We also use here the fact that
the cohomology groups $H^*_{\zar}(K,\bb Z(n))=\lp H^*_{\zar}(K_i,\bb Z(n))$ are defined
intrinsically in terms of the field $K$ and are independent of the
choice of the base field.

(4). The proof is based on Bachmann's results on the $MW$-motivic
cohomology~\cite{Bachmann}. By Example~\ref{primery} the Zariski
sheaf $\wt{Cor}(-,\bb G_m^{\wedge n})$ is rationally contractible
for every $n>0$.

If we consider the cochain complex of Zariski
sheaves~\eqref{tildeZ(n)} and repeat the arguments for the proof of
the second assertion, we shall get a strongly convergent spectral
sequence
   $$\wt{E}^2_{pq}:=H_{p}(\cc H^{n-1-q}_{\zar}(\wt{\bb Z}(n))(\wh{\Delta}^\bullet_{k}))
       \Longrightarrow\wh{K}_{n,p+q+2}^{MW}(k).$$
The natural functor of additive categories of correspondences
$\wt{Cor}\to Cor$ induces a map of spectral sequences
$\wt{E}^2_{pq}\to{E}^2_{pq}$, where $E^2_{pq}$ is the spectral
sequence of the second assertion.

It follows from~\cite[Theorem~17]{Bachmann} that the morphism of
complexes $\tau_{<n}\wt{\bb Z}(n)\to\tau_{<n}\bb Z(n)$ is a
quasi-isomorphism, locally in the Nisnevich topology. This means
that each morphism of Nisnevich sheaves $\cc H_{\nis}^p(\wt{\bb
Z}(n)_{\nis})\to\cc H_{\nis}^p({\bb Z}(n))$, $p\ne n$, is an
isomorphism. The relation between notations of~\cite{Bachmann} and
this paper is as follows: $\pi_i(\wt{H}\bb Z)_j=\cc
H_{\nis}^{j-i}(\wt{\bb Z}(j)_{\nis})$, $\pi_i({H_\mu}\bb Z)_j=\cc
H_{\nis}^{j-i}({\bb Z}(j)_{\nis})$.

It follows from~\cite[3.5]{BF} that
   $$\cc H_{\nis}^p(\wt{\bb Z}(n)_{\nis})(\wh{\Delta}^\ell_{k})=\cc H_{\zar}^p(\wt{\bb Z}(n))(\wh{\Delta}^\ell_{k}),\quad\ell\geq 0.$$
Using~\cite[5.5]{Voe}, one has
   $$\cc H_{\nis}^p({\bb Z}(n))(\wh{\Delta}^\ell_{k})=\cc H_{\zar}^p({\bb Z}(n))(\wh{\Delta}^\ell_{k}),\quad\ell\geq 0.$$

Therefore the morphism of strongly convergent spectral sequences
$\wt{E}^2_{pq}\to{E}^2_{pq}$ is an isomorphism. This isomorphism
implies that the map of chain complexes ${\cc
K}_n^{MW}(\wh{\Delta}^\bullet_{k})\to{\cc
K}_n^{M}(\wh{\Delta}^\bullet_{k})$ is a quasi-iso\-mor\-phism, as was to
be proved.
\end{proof}


\begin{cor}\label{weight1}
Given any field $k$, semilocal Milnor $K$-theory complex ${\cc
K}_1^M(\wh{\Delta}^\bullet_{k})$ is acyclic. In particular,
$\wh{K}_{1,q}^M(k)=0$ for all $q\in\bb Z$.
\end{cor}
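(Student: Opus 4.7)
The plan is to deduce this corollary directly from Theorem~\ref{glavnaya}. Part~(1) of that theorem already supplies the vanishing $\wh{K}_{1,0}^M(k)=\wh{K}_{1,1}^M(k)=0$, and vanishing for $q<0$ holds by definition, so the only remaining task is to prove $\wh{K}_{1,q}^M(k)=0$ for every $q\geq 2$.

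For these remaining degrees I would specialize the strongly convergent spectral sequence of Theorem~\ref{glavnaya}(2) to $n=1$, namely
$$E^2_{pq}=H_{p}(\cc H^{-q}_{\zar}(\tau_{<1}\bb Z(1))(\wh{\Delta}^\bullet_{k}))\Longrightarrow\wh{K}_{1,p+q+2}^M(k).$$
The key structural input is the well-known identification of the weight one motivic complex: by Suslin--Voevodsky~\cite[3.2]{SV}, $\bb Z(1)$ is quasi-isomorphic to $\cc O^\times[-1]$, so its cohomology sheaves $\cc H^p_{\zar}(\bb Z(1))$ vanish for every $p\ne 1$, and in particular for all $p<1$. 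Consequently the truncation $\tau_{<1}\bb Z(1)$ is itself acyclic, so every $E^2$-term of the spectral sequence is zero.

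Strong convergence then forces $\wh{K}_{1,n}^M(k)=0$ for every $n\geq 2$, and together with part~(1) this shows that ${\cc K}_1^M(\wh{\Delta}^\bullet_{k})$ has trivial homology in every degree, i.e., is acyclic. There is no substantive obstacle in this argument: once Theorem~\ref{glavnaya} is in hand and the elementary structure of $\bb Z(1)$ is recalled, the corollary is a direct unwinding. If anything, the only point that deserves care is noting that the acyclicity of $\tau_{<1}\bb Z(1)$ (as a complex of presheaves, hence after evaluation on each $\wh{\Delta}^\ell_{k}$) is what makes every $E^2$-term vanish, rather than some finer input about the semilocal schemes $\wh{\Delta}^\ell_{k}$ themselves.
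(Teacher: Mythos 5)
Your proposal is correct and follows essentially the same route as the paper: the paper's proof likewise invokes the acyclicity of $\tau_{<1}\bb Z(1)$ from \cite[3.2]{SV} to kill the $E^2$-page of the spectral sequence of Theorem~\ref{glavnaya}(2) for $n=1$. Your extra care in handling the degrees $q\leq 1$ via Theorem~\ref{glavnaya}(1) and the definition is a reasonable explicit supplement to what the paper leaves implicit.
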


\begin{proof}
By~\cite[3.2]{SV} the complex $\tau_{<1}\bb Z(1)$ is acyclic.
Therefore the $E^2$-page of the strongly convergent spectral
sequence of Theorem~\ref{glavnaya}(2) for $n=1$ is trivial. Our
statement now follows.
\end{proof}

\begin{cor}\label{coker}
$\wh{K}_{n,2}^M(k)=\coker(H^{n-1}_{\zar}(\wh{\Delta}^1_k,\bb Z(n))\xrightarrow{\partial_1-\partial_0}
H^{n-1}_{\zar}(k,\bb Z(n)))$ for any field $k$ and $n>1$.
\end{cor}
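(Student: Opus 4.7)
The plan is to read the corollary off as the immediate collapse of the spectral sequence of Theorem~\ref{glavnaya}(2) at total degree $2$. Computing $\wh{K}^M_{n,2}(k)$ demands $p+q+2=2$, i.e.\ $p+q=0$, and since the $E^2$-page lives in the first quadrant the only term that can contribute is $E^2_{0,0}$. All differentials $d_r$ into or out of $(0,0)$ land outside the first quadrant, so $E^\infty_{0,0}=E^2_{0,0}$ and no extension issue arises. This already gives
$$\wh{K}^M_{n,2}(k)=E^2_{0,0}=H_0\bigl(\cc H^{n-1}_{\zar}(\tau_{<n}\bb Z(n))(\wh{\Delta}^\bullet_{k})\bigr).$$
Because $n-1<n$, the canonical truncation $\tau_{<n}$ does not alter the $(n-1)$-st cohomology sheaf, so this equals $H_0(\cc H^{n-1}_{\zar}(\bb Z(n))(\wh{\Delta}^\bullet_{k}))$.

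Next I would unfold what $H_0$ of this chain complex means. By definition it is the cokernel of the degree-$1$ differential, which is the alternating sum of the two face maps induced by the cofaces $\wh{\Delta}^0_{k}=\spec k\rightrightarrows\wh{\Delta}^1_{k}$:
$$\wh{K}^M_{n,2}(k)=\coker\bigl(\cc H^{n-1}_{\zar}(\bb Z(n))(\wh{\Delta}^1_{k})\xrightarrow{\partial_1-\partial_0}\cc H^{n-1}_{\zar}(\bb Z(n))(k)\bigr).$$

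The only substantive step left is to replace the evaluation of the cohomology sheaf with the Zariski hypercohomology of the complex $\bb Z(n)$, separately on $W=\wh\Delta^1_{k}$ and on $W=\spec k$. For this I would invoke the Zariski hypercohomology spectral sequence
$$E_2^{p,q}=H^p_{\zar}(W,\cc H^q_{\zar}(\bb Z(n)))\Longrightarrow H^{p+q}_{\zar}(W,\bb Z(n))$$
and appeal to the fact that each cohomology sheaf $\cc H^q_{\zar}(\bb Z(n))$ is a homotopy invariant Zariski sheaf with transfers, hence has vanishing higher Zariski cohomology on smooth semilocal schemes. This is already used elsewhere in the paper via \cite[4.24]{Voe} and \cite[5.5]{Voe}. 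The hypercohomology spectral sequence therefore collapses to its bottom row and yields $\cc H^{n-1}_{\zar}(\bb Z(n))(W)=H^{n-1}_{\zar}(W,\bb Z(n))$ for the two required $W$. Substituting this identification into the cokernel expression above gives the stated formula. The only point at which any actual input (beyond the already-established Theorem~\ref{glavnaya}(2)) is required is this hypercohomology identification; the rest is bookkeeping about first-quadrant degeneration and the definition of $H_0$ of a cosimplicial object.
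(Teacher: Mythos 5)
Your argument is correct and is exactly the intended one: the paper offers no separate proof of Corollary~\ref{coker}, treating it as immediate from Theorem~\ref{glavnaya}(2), and your write-up supplies precisely the steps that reading requires (only $E^2_{0,0}$ contributes in total degree $2$, $\tau_{<n}$ is invisible in degree $n-1$, $H_0$ is the cokernel of $\partial_1-\partial_0$, and sections of $\cc H^{n-1}_{\zar}(\bb Z(n))$ on a smooth semilocal scheme compute hypercohomology by the vanishing of higher Zariski cohomology of homotopy invariant sheaves with transfers). The last identification is indeed the only point needing outside input, and it is the same one the paper uses tacitly elsewhere, e.g.\ in Theorem~\ref{hn-1n}.
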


If $k$ is a field of positive characteristic $p$, then 
Milnor $K$-theory groups of $k$ are $p$-torsionfree by a theorem of Izhboldin~\cite{Izh}. 
The following statement says that
semilocal Milnor $K$-theory groups are $p$-uniquely divisible.

\begin{cor}\label{poschar}
Given any field $k$ of positive characteristic $p>0$, semilocal Milnor $K$-theory groups 
$\wh{K}_{n,m}^M(k)$ are $p$-uniquely divisible for all $n>0$ and $m\in\bb Z$. In particular,
$\wh{K}_{n,m}^M(k)=\wh{K}_{n,m}^M(k)\otimes\bb Z[1/p]$.
\end{cor}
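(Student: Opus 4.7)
The plan is to transport vanishing of motivic cohomology with $\bb Z/p$-coefficients, coming from the Geisser--Levine theorem, through the strongly convergent spectral sequence of Theorem~\ref{glavnaya}(2). First, by Geisser--Levine together with Voevodsky's result that $\bb A^1$-invariant sheaves with transfers have vanishing higher Zariski cohomology on smooth semilocal schemes, one has
\[
   H^i_{\zar}(W,\bb Z/p(n))=0\quad\text{for all }i\ne n
\]
and all smooth semilocal $W$ over a field $k$ of characteristic $p$. Feeding this into the long exact sequence associated with $0\to\bb Z(n)\xrightarrow{p}\bb Z(n)\to\bb Z/p(n)\to 0$, we conclude that multiplication by $p$ is an isomorphism on $H^i_{\zar}(W,\bb Z(n))$ for every $i<n$; equivalently, these groups are $p$-uniquely divisible (i.e.\ $\bb Z[1/p]$-modules).

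Specializing to $W=\wh{\Delta}^\ell_{k}$ and using that the hypercohomology spectral sequence collapses on smooth semilocal schemes, one identifies $\cc H^{n-1-q}_{\zar}(\bb Z(n))(\wh{\Delta}^\ell_{k})$ with $H^{n-1-q}_{\zar}(\wh{\Delta}^\ell_{k},\bb Z(n))$. Hence for $q\geq 0$ (so that $n-1-q<n$) every term of the chain complex $\cc H^{n-1-q}_{\zar}(\tau_{<n}\bb Z(n))(\wh{\Delta}^\bullet_{k})$ is $p$-uniquely divisible, and therefore so is its homology, which is the $E^2_{pq}$ term of the spectral sequence. Any $\bb Z$-linear map between $\bb Z[1/p]$-modules is automatically $\bb Z[1/p]$-linear, so taking kernels and cokernels of the differentials preserves the property of being $p$-uniquely divisible; by induction on the page number, every $E^r_{pq}$ and the limit $E^\infty_{pq}$ remain $p$-uniquely divisible.

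Since the spectral sequence is supported in the first quadrant, convergence proceeds through a finite filtration on each $\wh{K}^M_{n,m}(k)$ with successive quotients the $E^\infty$-terms. A five-lemma argument applied to the map of short exact sequences induced by multiplication by $p$ shows that extensions of $p$-uniquely divisible abelian groups are $p$-uniquely divisible, and iterating through the finite filtration proves the corollary for $m\geq 2$; the cases $m\leq 1$ are immediate from Theorem~\ref{glavnaya}(1) and from the definition $\wh{K}^M_{n,m}(k)=0$ for $m<0$. The main technical input --- and the only genuinely non-formal step --- is the Geisser--Levine vanishing of $H^i_{\zar}(W,\bb Z/p(n))$ for $i\ne n$ on smooth semilocal $W$ in positive characteristic; once that is in place, the passage through the spectral sequence is purely formal.
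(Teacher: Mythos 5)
Your proof is correct and follows essentially the same route as the paper: the only real input is the Geisser--Levine theorem, which is then pushed through the strongly convergent spectral sequence of Theorem~\ref{glavnaya}(2) exactly as in the paper's argument. The one (minor) difference is in how the $E^2$-term is shown to be $p$-uniquely divisible: the paper upgrades the field-level statement of Geisser--Levine to the assertion that the cohomology sheaves $\cc H^{s<n}_{\zar}(\bb Z(n))$ are sheaves of $\bb Z[1/p]$-modules via~\cite[4.20]{Voe}, whereas you work with hypercohomology of the semilocal schemes $\wh{\Delta}^\ell_k$ and the mod-$p$ Bockstein sequence --- both variants rest on the same Voevodsky-style facts about homotopy invariant sheaves with transfers on semilocal schemes, and your explicit handling of the convergence/filtration step is a welcome elaboration of what the paper leaves implicit.
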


\begin{proof}
We claim that $\cc H^s_{\zar}(\bb Z(n))$ is a sheaf of $\bb Z[1/p]$-modules for all $n>0$ and $s<n$. The 
Geisser--Levine theorem~\cite[1.1]{GeiLev} implies that $\cc H^s_{\zar}(\bb Z(n))(K)$ is $p$-uniquely divisible 
for any field extension $K/k$. It follows that the morphism of sheaves 
$\cc H^s_{\zar}(\bb Z(n))\to\cc H^s_{\zar}(\bb Z(n))\otimes\bb Z[1/p]$ is an isomorphism on field extensions $K/k$,
and hence it is an isomorphism of sheaves by~\cite[4.20]{Voe}.

We see that the $E^2$-term of the strongly convergent spectral sequence of Theorem~\ref{glavnaya}(2) consists
of $p$-uniquely divisible Abelian groups, and hence the semilocal Milnor $K$-theory groups of the 
statement are $p$-uniquely divisible.
\end{proof}

\begin{rem}
The preceding theorem implies that the evaluation of the
Milnor--Witt sheaf ${\cc K}_n^{MW}$, $n\geq 1$, at
$\wh{\Delta}^\bullet_{k}$ ``deletes" the information about quadratic
forms.
\end{rem}

By Remark~\ref{naive} if the base field $k$ is infinite, Milnor
$K$-theory of semilocal schemes like $\wh{\Delta}^\ell_{k}$ has an
explicit, naive description, whereas motivic cohomology involves
sophisticated constructions. Thus Theorem~\ref{glavnaya} computes
some motivic cohomology groups as homology groups of certain naive
complexes. In particular, we can apply ``symbolic" computations to cycles in 
motivic complexes. It is worth pointing out that the term
``symbolic" here refers to the symbols in the definition of Milnor
$K$-theory.

Furthermore, a theorem of Kerz~\cite[1.2]{Kerz} implies that
the norm residue homomorphism induces an isomorphism of complexes
   $$\cc K^M_n(\wh{\Delta}^\bullet_k)\otimes\bb Z/\ell\lra{\cong}H^n_{et}(\wh{\Delta}^\bullet_k,\mu_\ell^{\otimes n}),\quad n>0,$$
if the field $k$ is infinite of characteristic not dividing $\ell$. 
Since the second and the third statement of Theorem~\ref{glavnaya}
are true with finite coefficients, it follows that semilocal Milnor $K$-theory groups with
finite coefficients can be computed as homology groups of complexes $H^n_{et}(\wh{\Delta}^\bullet_k,\mu_\ell^{\otimes n})$.

Recall from~\cite{Suslin90} that the indecomposable $K_3$-group of a
field $k$, denoted by $K_3^{\ind}(k)$, is defined as the cokernel of
the canonical homomorphism $K_3^M(k)\to K_3(k)$.

\begin{cor}\label{Kind}
For any field $k$, there is an isomorphism $K_3^{\ind}(k)=\wh{K}_{2,2}^{M}(k)$.
In particular, $K_{3}(k)^{(2)}_{\bb Q}=\wh{K}_{2,2}^{M}(k)_{\bb Q}$.
\end{cor}

\begin{proof}
The motivic spectral sequence gives an isomorphism
$K_3^{\ind}(k)=H^1_{\zar}(k,\bb Z(2))$. Now Theorem~\ref{glavnaya}
implies the claim.
\end{proof}

\begin{cor}\label{weight2}
For any perfect field $k$, any connected $X\in Sm/k$ and any $p\leq
1$, there is an isomorphism
   $$H^p_{\zar}(X,\bb Z(2))=\wh{K}_{2,3-p}^{M}(k(X)),$$
where $k(X)$ is the function field of $X$.
\end{cor}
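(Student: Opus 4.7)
The plan is to reduce Corollary~\ref{weight2} to Theorem~\ref{glavnaya}(3) applied at the field $k(X)$, with the key intermediate step being the identification
$$H^p_{\zar}(X,\bb Z(2))\cong H^p_{\zar}(k(X),\bb Z(2))\qquad\text{for }p\leq 1.$$
The weight-2 input I would import is already contained in the proof of Theorem~\ref{glavnaya}(3): Voevodsky's Cancellation Theorem combined with~\cite[2.5.2]{KS} shows that $\cc H^p_{\zar}(\bb Z(2))$ is a birational Nisnevich sheaf with transfers for every $p\leq 1$.

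For a birational homotopy invariant Nisnevich sheaf with transfers $\cc F$ over the perfect field $k$, I would use the Gersten resolution: the vanishing of the $-1$-contraction $\cc F_{-1}$ truncates the resolution and exhibits $\cc F$ as the pushforward $(i_\eta)_*\cc F(k(X))$ of a constant sheaf from the generic point $\eta$ of any connected smooth $X$. This is a flasque sheaf, so $H^i_{\zar}(X,\cc F)=0$ for $i>0$ and $\cc F(X)=\cc F(k(X))$. Feeding this into the hypercohomology spectral sequence
$$E_2^{i,j}=H^i_{\zar}\bigl(X,\cc H^j_{\zar}(\bb Z(2))\bigr)\Longrightarrow H^{i+j}_{\zar}(X,\bb Z(2)),$$
and observing that in total degree $p\leq 1$ only indices with $j\leq p\leq 1$ contribute, so that all the relevant coefficient sheaves are birational, the spectral sequence collapses to yield
$$H^p_{\zar}(X,\bb Z(2))=\cc H^p_{\zar}(\bb Z(2))(X)=\cc H^p_{\zar}(\bb Z(2))(k(X))=H^p_{\zar}(k(X),\bb Z(2)),$$
where the last equality holds because $\spec k(X)$ is a single Zariski point.

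Finally, Theorem~\ref{glavnaya}(3) is stated for any field and therefore applies to $k(X)$ (which is finitely generated over the perfect field $k$ but may itself be imperfect), giving $H^p_{\zar}(k(X),\bb Z(2))=\wh{K}^M_{2,3-p}(k(X))$ and finishing the proof.

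The main obstacle I expect is the higher cohomology vanishing $H^i_{\zar}(X,\cc H^p_{\zar}(\bb Z(2)))=0$ for $i>0$ and $p\leq 1$. Birationality alone does not suffice — one needs the transfers structure so that Voevodsky's Gersten resolution is available, plus the specific feature of birational sheaves with transfers that the $-1$-contraction vanishes and the resolution collapses to a pushforward from the generic point. By contrast, the cancellation input, the assembly of the hypercohomology spectral sequence, and the final appeal to Theorem~\ref{glavnaya}(3) are bookkeeping.
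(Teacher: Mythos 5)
Your proposal matches the paper's proof exactly: the paper also deduces the corollary from Theorem~\ref{glavnaya}(3) applied to $k(X)$ together with the isomorphism $H^p_{\zar}(X,\bb Z(2))=H^p_{\zar}(k(X),\bb Z(2))$ for $p\leq 1$, which it simply cites as a known fact. Your justification of that fact via birationality of the sheaves $\cc H^{p\leq 1}_{\zar}(\bb Z(2))$ (cancellation plus~\cite[2.5.2]{KS}) and the collapsed Gersten resolution is the standard argument behind it and is correct.
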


\begin{proof}
This follows from Lemma~\ref{birat}, the proof of Theorem~\ref{glavnaya}(3) and the fact that for any
$p\leq 1$ there is an isomorphism of groups $H^p_{\zar}(X,\bb
Z(2))=H^p_{\zar}(k(X),\bb Z(2))$.
\end{proof}

Since $H^p_{\zar}(k,\bb Z(q))$ is uniquely divisible for $p\leq 0$ and any field $k$ (see, e.g.,~\cite[Exercise~VI.4.6]{Wei}),
Theorem~\ref{glavnaya}(3) implies the following

\begin{cor}\label{uniqdiv}
For any field $k$ and any $n\geq 3$, the group
$\wh{K}_{2,n}^{M}(k)$ is uniquely divisible.
\end{cor}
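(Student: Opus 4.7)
The plan is to read the statement off directly from Theorem~\ref{glavnaya}(3), combined with the unique divisibility of $H^p_{\zar}(k,\bb Z(q))$ for $p\leq 0$ that is recalled immediately before the corollary. The identification supplied by part~(3) of the main theorem has the form $H^p_{\zar}(k,\bb Z(2))=\wh{K}^M_{2,3-p}(k)$ in the range $p\leq 1$, so the task reduces to choosing an appropriate value of $p$ and checking that it lies simultaneously in both ranges of validity.

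First, I would set $p=3-n$. For $n\geq 3$ this satisfies $p\leq 0$, and in particular $p\leq 1$, so Theorem~\ref{glavnaya}(3) gives an isomorphism of Abelian groups
$$\wh{K}^M_{2,n}(k) \;\cong\; H^{3-n}_{\zar}(k,\bb Z(2)).$$
Next, I would invoke the recalled fact (a standard consequence of the vanishing of Milnor $K$-groups in negative degrees, cf.~Exercise~VI.4.6 in Weibel) applied with $q=2$ and $p=3-n\leq 0$: this says the right-hand side is uniquely divisible. Transporting this property across the isomorphism yields the claim.

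There is essentially no obstacle here, since all the heavy lifting has already been carried out in the proof of Theorem~\ref{glavnaya}(3). The only delicate point to record is that the hypothesis $n\geq 3$ is precisely what aligns the two cited ranges: the identification works for $p\leq 1$ while unique divisibility requires $p\leq 0$, and $n\geq 3$ places $p=3-n$ in the intersection. Note that for $n=2$ one would land at $p=1$, where the motivic cohomology group is $K_3^{\ind}(k)$, which need not be uniquely divisible; so the bound $n\geq 3$ is sharp for this method.
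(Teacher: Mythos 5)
Your proposal is correct and is exactly the paper's argument: the corollary is deduced by combining Theorem~\ref{glavnaya}(3) (with $p=3-n\leq 0$) with the recalled unique divisibility of $H^{p}_{\zar}(k,\bb Z(q))$ for $p\leq 0$. Your additional observation that $n=2$ lands at $p=1$, where the group is $K_3^{\ind}(k)$, is a sensible sanity check but not needed for the proof.
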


\begin{cor}\label{ratKtheory}
For any field $k$ there are isomorphisms of rational vector spaces
   $$K_{4+p}(k)^{(2)}_{\bb Q}=\wh{K}_{2,3+p}^{M}(k),\quad p\geq 0.$$
Moreover, $K_3(k)_{\bb Q}={K}_{3}^{M}(k)_{\bb Q}\oplus\wh{K}_{2,2}^{M}(k)_{\bb Q}$.
\end{cor}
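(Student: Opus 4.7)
The plan is to combine Theorem~\ref{glavnaya}(3) with the rational weight decomposition of Quillen $K$-theory coming from Adams operations. For any field $k$, one has a natural decomposition
$$K_n(k)_{\bb Q} = \bigoplus_{q \geq 0} K_n(k)^{(q)}_{\bb Q}, \qquad K_n(k)^{(q)}_{\bb Q} \cong H^{2q-n}_{\zar}(k, \bb Q(q)),$$
which equivalently expresses the rational degeneration of the motivic spectral sequence recalled in the introduction.

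To establish the two displayed isomorphisms I would specialise to weight $q = 2$. For $n = 3$ this yields $K_3(k)^{(2)}_{\bb Q} \cong H^1_{\zar}(k, \bb Q(2))$, while for $n = 4 + p$ with $p \geq 0$ it yields $K_{4+p}(k)^{(2)}_{\bb Q} \cong H^{-p}_{\zar}(k, \bb Q(2))$. Theorem~\ref{glavnaya}(3) identifies $H^{1-j}_{\zar}(k, \bb Z(2))$ with $\wh{K}^M_{2,\, 2+j}(k)$ for every $j \geq 0$, so after tensoring with $\bb Q$ I obtain both stated identifications. Corollary~\ref{uniqdiv} guarantees that for $p \geq 0$ the group $\wh{K}^M_{2,\, 3+p}(k)$ is uniquely divisible, hence already a $\bb Q$-vector space; this justifies dropping the subscript $\bb Q$ on the right-hand side of the second formula.

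For the ``In particular'' assertion I need to determine which weight eigenspaces contribute to $K_3(k)_{\bb Q}$. In the decomposition above, the weight-$q$ summand is $H^{2q-3}_{\zar}(k, \bb Q(q))$; this vanishes trivially for $q = 0$, for $q = 1$ because $\bb Z(1) \simeq \cc O^\times[-1]$ is concentrated in cohomological degree $1$, and for $q \geq 4$ because motivic cohomology of a field satisfies $H^p_{\zar}(k, \bb Z(q)) = 0$ for $p > q$. Hence only the weights $q = 2, 3$ survive, contributing $\wh{K}^M_{2,\, 2}(k)_{\bb Q}$ (by the previous paragraph) and $H^3_{\zar}(k, \bb Q(3)) \cong K_3^M(k)_{\bb Q}$ (by Nesterenko--Suslin and Totaro), respectively. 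Assembling these gives the claimed splitting.

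The main obstacle is invoking the rational weight decomposition cleanly: once this classical input of Soulé (via Adams operations on Quillen $K$-theory) is granted, the remaining work reduces mechanically to Theorem~\ref{glavnaya}(3) together with the standard vanishing ranges for motivic cohomology of a field. No further motivic-homotopy machinery is required beyond what has already been established in the paper.
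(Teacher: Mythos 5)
Your proposal is correct and follows essentially the same route as the paper: the paper's own (very terse) proof likewise combines the rational degeneration of the motivic spectral sequence in the form $H^{-p}_{\zar}(k,\bb Q(2))=K_{4+p}(k)^{(2)}_{\bb Q}$ for $p\geq -1$ with Theorem~\ref{glavnaya}(3) and Corollary~\ref{uniqdiv}. Your version merely spells out the weight bookkeeping for the ``in particular'' splitting (vanishing in weights $0,1$ and $\geq 4$, Nesterenko--Suslin/Totaro in weight $3$), which the paper leaves implicit.
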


\begin{proof}
This follows from Theorem~\ref{glavnaya}, Corollary~\ref{uniqdiv} and the fact that for any
$p\geq -1$ there is an isomorphism $H^{-p}_{\zar}(k,\bb Q(2))=K_{4+p}(k)^{(2)}_{\bb Q}$.
\end{proof}

\begin{thm}\label{trancs}
Semilocal Milnor $K$-theory is invariant under purely
transcendental extensions. Na\-me\-ly,
$\wh{K}_{n,q}^{M}(k)=\wh{K}_{n,q}^{M}(k(x))$ for any field $k$ and
$n,q\geq 0$.
\end{thm}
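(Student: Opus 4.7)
The plan is to deduce the theorem from Theorem~\ref{glavnaya}(2). The inclusion $k\hookrightarrow k(x)$ induces a morphism of cosimplicial pro-smooth $k$-schemes $\wh{\Delta}^\bullet_{k(x)}\to\wh{\Delta}^\bullet_k$ and, via evaluation of each sheaf $\cc H^s_{\zar}(\bb Z(n))$, a morphism of the strongly convergent spectral sequences of Theorem~\ref{glavnaya}(2) for $k$ and $k(x)$. By the standard comparison theorem for strongly convergent spectral sequences, it suffices to show this morphism is an isomorphism on $E^2$-pages, that is, that for every $s<n$ the chain-complex map
\[ \cc H^s_{\zar}(\bb Z(n))(\wh{\Delta}^\bullet_k)\longrightarrow \cc H^s_{\zar}(\bb Z(n))(\wh{\Delta}^\bullet_{k(x)}) \]
is a quasi-isomorphism.

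Each $\cc H^s_{\zar}(\bb Z(n))$ with $s<n$ is an $\bb A^1$-invariant Nisnevich sheaf with transfers (by Voevodsky), hence strictly $\bb A^1$-invariant. So it is enough to establish the following auxiliary statement: \emph{for any $\bb A^1$-invariant Nisnevich sheaf with transfers $\cc F$ on $Sm/k$, the natural map $\cc F(\wh{\Delta}^\bullet_k)\to\cc F(\wh{\Delta}^\bullet_{k(x)})$ induced by $k\hookrightarrow k(x)$ is a quasi-isomorphism.} To prove this auxiliary, I would first use the realization of $\wh{\Delta}^\ell_{k(x)}$ as the semilocalization of $\wh{\Delta}^\ell_k\times_k\bb A^1_k$ at the codimension-$\ell$ points $(v_0,\eta),\ldots,(v_\ell,\eta)$, with $\eta$ the generic point of $\bb A^1_k$, exactly as in the proof of Theorem~\ref{glavnaya}(3). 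Strict $\bb A^1$-invariance of $\cc F$ then gives $\cc F(\wh{\Delta}^\ell_k)\xrightarrow{\sim}\cc F(\wh{\Delta}^\ell_k\times\bb A^1_k)$, compatibly with the cosimplicial structure, and reduces the auxiliary to the claim that the pro-open restriction map
\[ \cc F(\wh{\Delta}^\bullet_k\times\bb A^1_k)\longrightarrow \cc F(\wh{\Delta}^\bullet_{k(x)}) \]
is a quasi-isomorphism. Here one invokes the motivic-homotopic machinery flagged in the introduction: one constructs a contracting homotopy for the cone by $\bb A^1$-deforming the semilocalized simplices, interpreted inside the derived category of motives.

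The main obstacle is exactly this last quasi-isomorphism. In the proof of Theorem~\ref{glavnaya}(3) one used birationality of the weight-$2$ cohomology sheaves (a consequence of the Cancellation Theorem) to collapse $\cc F(\wh{\Delta}^\bullet_K)$ onto $\cc F(K)$ in degree zero, which made transcendental invariance immediate; for arbitrary weight $n$ no such birationality is available, so one must genuinely work at the chain-complex level through motivic homotopy theory. Finally, to drop any perfectness or characteristic hypotheses that come with motivic-homotopic inputs, one would extend the result to an arbitrary base field $k$ by writing it as a filtered colimit of its finitely generated subfields over the prime field and using that $\wh{K}^M_{n,q}(-)$, the spectral sequence, and all constructions involved commute with such colimits, as in the concluding part of the proof of Theorem~\ref{glavnaya}(3).
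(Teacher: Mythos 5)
Your proposal correctly isolates the crux of the matter --- that one must show the map of chain complexes $\cc F(\wh{\Delta}^\bullet_K)\to\cc F(\wh{\Delta}^\bullet_{K(x)})$ is a quasi-isomorphism for a homotopy invariant (Nisnevich) sheaf with transfers $\cc F$ --- but you do not actually prove it. You name it yourself as ``the main obstacle'' and then gesture at ``constructing a contracting homotopy for the cone by $\bb A^1$-deforming the semilocalized simplices, interpreted inside the derived category of motives.'' That sentence is not an argument: no homotopy is constructed, and the difficulty is precisely that the semilocalization $\wh{\Delta}^\ell_{K(x)}$ sits at the \emph{generic} point of the $\bb A^1$-direction, so naive $\bb A^1$-invariance of $\cc F$ on $\wh{\Delta}^\ell_k\times\bb A^1$ does not by itself let you pass to the colimit over open neighbourhoods of $(v_0,\eta),\ldots,(v_\ell,\eta)$ compatibly with the cosimplicial structure. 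The paper closes exactly this gap by citing Kahn--Levine \cite[2.2.6 and 4.2.1]{KL} (together with \cite[4.7]{Suslin17} to handle non-perfect base fields), which is where the real work lives; without that input, or an equivalent argument, your proof is incomplete.

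A secondary structural remark: your detour through the spectral sequence of Theorem~\ref{glavnaya}(2) is unnecessary. The sheaf $\cc K_n^M=\cc H^n_{\zar}(\bb Z(n))$ is itself a strictly homotopy invariant sheaf with transfers over a perfect field, so once the auxiliary quasi-isomorphism statement is available for a single such sheaf $\cc F$, you can apply it directly to $\cc F=\cc K_n^M$ and conclude immediately --- which is what the paper does. Comparing $E^2$-pages over all the cohomology sheaves $\cc H^{s}_{\zar}(\bb Z(n))$, $s<n$, proves nothing more and requires the same unproven lemma for each of them. The colimit reduction to finitely generated extensions at the end is fine and matches the paper.
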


\begin{proof}
Suppose the base field $k$ is perfect. Then the Zariski sheaf $\cc K_n^M$ on $Sm/k$ is strictly homotopy invariant.
Its Eilenberg--Mac~Lane motivic $S^1$-spectrum $EM(\cc K_n^M)$ is $\bb A^1$-local by~\cite[6.2.2]{Mor}, hence
$H^n_{\nis}(X,\cc K_n^M)=SH_{S^1}(k)(X_+,EM(\cc K_n^M)[n])$, where $SH_{S^1}(k)$ is
the homotopy category of motivic $S^1$-spectra. In particular,
the homology groups of the complex $\cc K_n^M(\wh{\Delta}_{k(X)/k}^\bullet)$ are iso\-morphic to 
the homotopy groups of the spectrum $EM(\cc K_n^M)(\wh{\Delta}_{k(X)/k}^\bullet)$
for any irreducible $X\in Sm/k$.
By~\cite[2.2.6]{KL} the latter spectrum is stably weakly equivalent to $s_0(EM(\cc K_n^M))(X)$, where $s_0(EM(\cc K_n^M))$
is the zeroth slice of $EM(\cc K_n^M)$ in the motivic Postnikov tower (see~\cite[\S1.2]{KL}).
Applying $s_0(EM(\cc K_n^M))$ to the morphism $\bb A^1\to pt$, one gets a stable weak equivalence 
of spectra $s_0(EM(\cc K_n^M))(pt)\to s_0(EM(\cc K_n^M))(\bb A^1)$. We see that
$\wh{K}_{n,q}^{M}(k)=\wh{K}_{n,q}^{M}(k(x))$ for any $n,q\geq 0$.

Suppose now $k$ is any field of characteristic $p>0$. Denote by $k_\infty$ the perfect closure of $k$.
Following Suslin~\cite{Suslin17}, the theory of motivic cohomology and associated categories of motives
with $p^{-1}$-coefficients over $k$ is essentially the same with the theory over $k_\infty$. We shall write
$\cc K_{n,\infty}^M$ for the $n$th Milnor $K$-theory sheaf on $Sm/k_\infty$. By~\cite[1.1 and 1.3]{Suslin17}
the canonical functor $\phi^\sharp:Shv_{\nis}(k)\to Shv_{\nis}(k_\infty)$ between the categories of Nisnevich sheaves
takes $\cc K_{n}^M$ to $\cc K_{n,\infty}^M$ (they are Nisnevich sheaves as well).
It follows from~\cite[2.13]{Suslin17} that the natural morphism of chain complexes
$\cc K_{n}^M(\wh{\Delta}_{k}^\bullet)[p^{-1}]\to\cc K_{n,\infty}^M(\wh{\Delta}_{k_\infty}^\bullet)[p^{-1}]$
is an isomorphism. It remains to apply Corollary~\ref{poschar} and invariance under purely
transcendental extensions for perfect fields proven above.
\end{proof}

We say that two smooth $k$-varieties $X$ and $Y$ are {\it stably $\bb A^1$-equivalent\/} if their
suspension motivic $S^1$-spectra $\Sigma_{S^1}^\infty X_+$ and $\Sigma_{S^1}^\infty Y_+$ are
isomorphic in the homotopy category of motivic $S^1$-spectra $SH_{S^1}(k)$. If $Y=pt$ we call $X$ 
{\it stably $\bb A^1$-contractible}. For example, any motivic equivalence between $X$ and $Y$ 
in the category of motivic spaces induces
an isomorphism of $\Sigma_{S^1}^\infty X_+$ and $\Sigma_{S^1}^\infty Y_+$.

The following result says that semilocal Milnor $K$-theory is an invariant for stably $\bb A^1$-equivalent varieties.

\begin{cor}\label{neozhidal}
Suppose $k$ is a perfect field. If $X,Y\in Sm/k$ are irreducible and stably $\bb A^1$-equivalent, then 
$\wh{K}_{n,q}^{M}(k(X))$ is isomorphic to $\wh{K}_{n,q}^{M}(k(Y))$ for any $n,q\geq 0$.
In particular, if $X$ is irreducible and stably $\bb A^1$-contractible, then $\wh{K}_{n,q}^{M}(k(X))$ is isomorphic to 
$\wh{K}_{n,q}^{M}(k)$ for any $n,q\geq 0$.
\end{cor}

\begin{proof}
We use the proof of Theorem~\ref{trancs} showing that the $\wh{K}_{n,q}^{M}(k(X))$ (respectively, $\wh{K}_{n,q}^{M}(k(Y))$)
are isomorphic to homotopy groups $\pi_*(s_0(EM(\cc K_n^M))(X))=SH_{S^1}(k)(\Sigma_{S^1}^\infty X_+[*],s_0(EM(\cc K_n^M)))$
(respectively, $\pi_*(s_0(EM(\cc K_n^M))(X))=SH_{S^1}(k)(\Sigma_{S^1}^\infty Y_+[*],s_0(EM(\cc K_n^M)))$).
\end{proof}

\section{Some criteria for the Beilinson--Soul\'e Vanishing Conjecture}

In this section an application of the technique developed in the
previous sections is given. Recall that the Beilinson--Soul\'e
Vanishing Conjecture states that each
complex $\bb Z(n)$, $n>0$, on $Sm/k$ is acyclic outside the interval
of cohomological degrees $[1,n]$.
It follows from~\cite[p.~352]{V02} that it suffices to verify
acyclicity of the complex $\bb Z(n)$ on $Sm/k$ outside the interval
$[1,n]$ whenever $k$ is perfect. Therefore the base field $k$ is
assumed to be perfect throughout this section.

The main result of this section, Theorem~\ref{bscriteria}, gives
equivalent conditions for the Beilinson--Soul\'e Vanishing
Conjecture. In particular, it says that instead of verifying
acyclicity of the sophisticated complexes $\bb Z(n)$ in non-positive
cohomological degrees, it is enough to verify acyclicity of the
chain complexes of Abelian
groups $\cc Z^0(n)(\wh{\Delta}_{K/k}^\bullet)$, 
where $\cc Z^0(n):=\kr\partial^0$ with $\partial^{0}:Cor(\Delta^{n}_k\times-,\gmpn)\to Cor(\Delta^{n-1}_k\times-,\gmpn)$ 
being the zeroth differential of the complex $\bb Z(n)$, and $K/k$ is a
finitely generated field extension.

\begin{thm}\label{bscriteria}
The following conditions are equivalent:
\begin{itemize}
\item[$(1)$] the Beilinson--Soul\'e Vanishing Conjecture is true for complexes $\bb Z(n)$, $n\geq 1$, on $Sm/k$;
\item[$(2)$] for every $n\geq 1$ and every finitely generated field extension $K/k$,
$\cc Z^0(n)(\wh{\Delta}_{K/k}^\bullet)$ is an acyclic chain complex;
\item[$(3)$] for every $n\geq 1$, $\cc Z^0(n)$ has a resolution in the
category of Zariski sheaves
   $$\cdots\lra{d^{-2}}\cc R^{-1}\lra{d^{-1}}\cc R^0\to\cc Z^0(n)$$
such that each $\cc R^i$ is rationally contractible and cohomology
presheaves $\cc H^{i<0}$ are trivial on the semilocal schemes of the
form $\wh{\Delta}^\ell_{K/k}$, where $K/k$ is a finitely generated
field extension;
\item[$(4)$] for every $n\geq 1$ and every finitely generated field extension
$K/k$, the total complex of the bicomplex $\tau_{\leq 0}(\bb
Z(n))(\wh{\Delta}_{K/k}^\bullet)$ is acyclic, where $\tau_{\leq 0}$
is the truncation in corresponding cohomological degrees;
\item[$(5)$] for every $n\geq 1$ and every finitely generated field extension $K/k$,
the total complex of the bicomplex $\tau_{[1,n]}(\bb
Z(n))(\wh{\Delta}_{K/k}^\bullet)$ is acyclic.
\end{itemize}
\end{thm}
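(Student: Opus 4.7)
The plan is to establish the cycle $(1)\Rightarrow(3)\Rightarrow(2)\Rightarrow(1)$ together with $(2)\Leftrightarrow(4)\Leftrightarrow(5)$. The central observation, used throughout, is that each term $\cc F^p=Cor(\Delta^{n-p}_k\times-,\bb G_m^{\wedge n})$ of the motivic complex $\bb Z(n)$ is rationally contractible by~\cite[9.6]{SV} and Proposition~\ref{facts}(1), so by Proposition~\ref{facts}(2) the column $\cc F^p(\wh\Delta^\bullet_{K/k})$ is a contractible chain complex for every $p\leq n$ and every finitely generated extension $K/k$. Consequently the total complexes attached to $\bb Z(n)$ and to its stupid truncations $\sigma_{\leq 0}\bb Z(n)$ and $\sigma_{\geq 1}\bb Z(n)$ evaluated at $\wh\Delta^\bullet_{K/k}$ are all acyclic.

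For $(1)\Rightarrow(3)$ take $\cc R^{-\ell}=\cc F^{-\ell}$. Under BS the augmented complex $\cdots\to\cc R^{-1}\to\cc R^0\to\cc Z^0(n)\to 0$ is an exact sequence of Zariski sheaves; its cohomology presheaves in degrees $<0$ coincide with $\cc H^{<0}_{\zar}(\bb Z(n))$, which vanish under (1). For $(3)\Rightarrow(2)$, form the bicomplex $\cc R^{-\ell}(\wh\Delta^j_{K/k})$: its rows are contractible, so the total complex is acyclic; the column-first spectral sequence collapses to the column $\ell=0$ with $E^1_{0,j}=H_j(\cc Z^0(n)(\wh\Delta^\bullet_{K/k}))$, which must therefore vanish.

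The equivalences $(2)\Leftrightarrow(4)\Leftrightarrow(5)$ come from comparing canonical with stupid truncations: the quotient $\sigma_{\leq 0}\bb Z(n)/\tau_{\leq 0}\bb Z(n)$ equals the sheaf $\im\partial^0$ placed in cohomological degree $0$, while the kernel of $\sigma_{\geq 1}\bb Z(n)\twoheadrightarrow\tau_{\geq 1}\bb Z(n)$ equals the sheaf $\im\partial^0$ placed in cohomological degree $1$. Combined with the acyclicity of the stupid truncations from the first paragraph, both (4) and (5) become equivalent to the acyclicity of $\im\partial^0(\wh\Delta^\bullet_{K/k})$, which in turn is equivalent to (2) via the short exact sequence $0\to\cc Z^0(n)\to\cc F^0\to\im\partial^0\to 0$ whose middle term has contractible sections at $\wh\Delta^\bullet_{K/k}$.

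The remaining direction $(2)\Rightarrow(1)$ proceeds by induction on the weight $n$, the case $n=1$ being~\cite[3.2]{SV}. For the inductive step, assume BS for $\bb Z(m)$ with $m<n$; Voevodsky's Cancellation Theorem makes each $\cc H^p_{\zar}(\bb Z(n))$, $p\leq 0$, a birational sheaf in the sense of~\cite[2.3.1]{KS}, exactly as in the proof of Theorem~\ref{glavnaya}(3), and the proof of~\cite[4.2.1]{KL} yields that $\cc H^p_{\zar}(\bb Z(n))(K)\to\cc H^p_{\zar}(\bb Z(n))(\wh\Delta^\bullet_{K/k})$ is a quasi-iso\-mor\-phism. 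Apply Theorem~\ref{prespseq} to $\cc F^\bullet=\sigma_{\leq 0}\bb Z(n)$: its cokernel $\cc L=\cc F^0/\im\partial^{-1}$ sits in $0\to\cc H^0_{\zar}(\bb Z(n))\to\cc L\to\im\partial^0\to 0$, and birationality makes the spectral sequence $E^2_{pq}=H_p(\cc H^{-1-q}_{\zar}(\bb Z(n))(\wh\Delta^\bullet_{K/k}))\Rightarrow H_{p+q+2}(\cc L(\wh\Delta^\bullet_{K/k}))$ degenerate at $E^2$, giving $H_m(\cc L(\wh\Delta^\bullet_{K/k}))=\cc H^{1-m}_{\zar}(\bb Z(n))(K)$ for $m\geq 2$, with $H_0(\cc L)=H_1(\cc L)=0$ from Theorem~\ref{prespseq}. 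On the other hand, combining the long exact sequence of the above short exact sequence with the acyclicity of $\im\partial^0(\wh\Delta^\bullet_{K/k})$ (from (2)) and birationality of $\cc H^0_{\zar}(\bb Z(n))$ gives $H_0(\cc L(\wh\Delta^\bullet_{K/k}))=\cc H^0_{\zar}(\bb Z(n))(K)$ and $H_m(\cc L(\wh\Delta^\bullet_{K/k}))=0$ for $m\geq 1$. Comparing the two computations at $m=0$ and $m\geq 2$ forces $\cc H^p_{\zar}(\bb Z(n))(K)=0$ for all $p\leq 0$ and all finitely generated $K/k$, which extends to arbitrary field extensions by direct limits. The main obstacle is the spectral-sequence degeneration in this last step, which hinges on the cancellation-induced birationality of the low-weight cohomology sheaves and on the Kahn--Levine semilocal homotopy invariance.
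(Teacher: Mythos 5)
Your proof is correct and takes essentially the same route as the paper's: rational contractibility of the sheaves $Cor(\Delta^\ell_k\times-,\gmpn)$ together with Proposition~\ref{facts} for all the truncation comparisons, and the Cancellation Theorem, birationality via~\cite{KS} and the Kahn--Levine semilocal rigidity~\cite{KL} for the key implication $(2)\Rightarrow(1)$, the only (harmless) reorganizations being that you prove $(2)\Leftrightarrow(4)\Leftrightarrow(5)$ directly through $\im\partial^0$ instead of routing through $(1)$, and that you isolate $\cc H^0$ in the extension $0\to\cc H^0\to\cc L\to\im\partial^0\to 0$ rather than placing it in the tower. One small correction: in $(1)\Rightarrow(3)$ the resolution must begin with a surjection onto $\cc Z^0(n)=\kr\partial^0$, so you should take $\cc R^{-\ell}=\cc F^{-\ell-1}=Cor(\Delta^{n+\ell+1}_k\times-,\gmpn)$ rather than $\cc F^{-\ell}$ (which contains $\cc Z^0(n)$ as a subsheaf instead of mapping onto it); exactness of the augmented complex at $\cc R^0$ and at $\cc Z^0(n)$ is then precisely the Beilinson--Soul\'e vanishing of $\cc H^{-1}(\bb Z(n))$ and $\cc H^{0}(\bb Z(n))$.
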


\begin{proof}
$(1)\Rightarrow(2)$. Given $\ell\geq 0$ and $n>0$, set
   $$\cc W^{-1-\ell}(n):=\kr(\partial^{-\ell}:Cor(\Delta^{n+\ell}_k\times-,\gmpn)\to Cor(\Delta^{n+\ell-1}_k\times-,\gmpn)).$$
The proof of Theorem~\ref{prespseq} and Corollary~\ref{shvspseq}
shows that there is a tower in the derived category $\cc D(\Ab)$ of
chain complexes of Abelian groups
   \begin{equation}\label{towerr}
     \cdots\lra{\alpha^{-3}}\cc W^{-3}(n)(\wh{\Delta}^\bullet_{K/k})[-2]\lra{\alpha^{-2}}
     \cc W^{-2}(n)(\wh{\Delta}^\bullet_{K/k})[-1]
     \lra{\alpha^{-1}}\cc W^{-1}(n)(\wh{\Delta}^\bullet_{K/k})
   \end{equation}
with $q$-th layer, $q\geq 0$, being the complex $\cc
H^{-q}_{\zar}(\wh{\Delta}^\bullet_{K/k})[-q]$. Here $\cc
H^{-q}_{\zar}$ stands for the $-q^{\textrm{th}}$ cohomology sheaf of
the complex $\bb Z(n)$. By definition, $\cc W^{-1}(n)=\cc Z^{0}(n)$.
We use here the fact that
$Cor(\Delta^{n+\ell}_k\times-,\gmpn)$ is a rationally contractible
sheaf by~Example~\ref{primery} and Proposition~\ref{facts}. Similarly to Theorem~\ref{glavnaya} the
tower~\eqref{towerr} yields a strongly convergent spectral sequence
   \begin{equation}\label{spspsp}
    E^2_{pq}=H_{p+q}(\cc H^{-q}_{\zar}(\wh{\Delta}_{K/k}^\bullet))
     \Rightarrow H_{p+q}(\cc Z^{0}(n)(\wh{\Delta}_{K/k}^\bullet))
   \end{equation}
By assumption, $\cc H^p_{\zar}(\bb Z(n))=0$ for $p\leq 0$. Therefore
the strongly convergent spectral sequence~\eqref{spspsp} is trivial,
and hence $\cc Z^{0}(n)(\wh{\Delta}_{K/k}^\bullet)$ is acyclic.

$(2)\Rightarrow(1)$. We use induction in $n$. By~\cite[3.2]{SV} $\bb Z(1)$ is acyclic in non-positive degrees,
hence the base case $n=1$.
Suppose $\bb Z(n-1)$ is acyclic outside the
interval $[1,n-1]$. We want to show that $\bb Z(n)$ is acyclic outside the
interval $[1,n]$.
Voevodsky's Cancellation Theorem~\cite{V1} together with~\cite[4.34]{Voe} implies that
   $$\underline{\Hom}(\bb G_m^{\wedge 1},\cc H^p_{\zar}(\bb Z(n)))
     =\underline{\Hom}(\bb G_m^{\wedge 1},\cc H^p_{\nis}(\bb Z(n)))=
     \cc H^{p-1}_{\nis}(\bb Z(n-1))=0$$
for all $p\leq 0$. We use here the fact that $\cc F_{\zar}=\cc
F_{\nis}$ for any homotopy invariant presheaf with transfers
(see~\cite[5.5]{Voe}). It follows from~\cite[2.5.2]{KS} that each
$\cc H^p_{\zar}(\bb Z(n))$, $p\leq 0$, is a birational (Nisnevich) sheaf.

Let $K/k$ be a finitely generated field extension. Then $K=k(X)$ for
some $X\in Sm/k$. Lemma~\ref{birat} implies that the natural map of chain complexes
   $$\cc H^p_{\zar}(\bb Z(n))(K)\to\cc H^p_{\zar}(\bb Z(n))(\wh{\Delta}^\bullet_{K}),\quad p\leq 0,$$
is a quasi-isomorphism. Thus $H_i(\cc H^p_{\zar}(\bb
Z(n))(\wh{\Delta}^\bullet_{K}))=0$ for all $i\ne 0$ and $H_0(\cc
H^p_{\zar}(\bb Z(n))(\wh{\Delta}^\bullet_{K}))=\cc H^p_{\zar}(\bb
Z(n))(K)$. Therefore the strongly convergent spectral
sequence~\eqref{spspsp} collapses, and hence
   $$0=H_i(\cc Z^{0}(n)(\wh{\Delta}_{K/k}^\bullet))=\cc H^{-i}_{\zar}(\bb Z(n))(K),\quad i\geq 0.$$
Each sheaf $\cc H^{-i}_{\zar}(\bb Z(n))$ is homotopy invariant
by~\cite{Voe} and trivial on finitely generated field extensions. It
follows from~\cite[4.20]{Voe} that $\cc H^{-i}_{\zar}(\bb Z(n))=0$,
hence $\bb Z(n)$ is acyclic outside the interval $[1,n]$. 

$(1)\Rightarrow(3)$. This is straightforward: set $\cc
R^\ell:=Cor(\Delta^{n+\ell+1}_k\times-,\gmpn)$ with differentials
being those of $\bb Z(n)$. We use here the facts that
$Cor(\Delta^{n+\ell}_k\times-,\gmpn)$ is rationally
contractible (see Example~\ref{primery} and Proposition~\ref{facts}) and that for any smooth
semilocal scheme $W$, any $\bb A^1$-invariant presheaf $\cc F$ with
transfers the canonical morphism $\cc F(W)\lra{\cong}\cc
F_{\zar}(W)$ is an isomorphism~\cite[4.24]{Voe}.

$(3)\Rightarrow(2)$. This follows from the spectral sequence of Theorem~\ref{prespseq}.

$(1)\Rightarrow(4)$. This is obvious.

$(4)\Leftrightarrow(5)$. It is enough to observe that the total
complex of the bicomplex $(\bb Z(n))(\wh{\Delta}_{K/k}^\bullet)$ is
acyclic for $n>0$. The latter easily follows from~\cite[2.2;
2.4]{Suslin} (see~\cite[2.3]{BF} as well).

$(4)\Rightarrow(2)$. The complex $\tau_{\leq 0}(\bb Z(n))$ equals
   $$\cdots\to Cor(\Delta^{n+2}_k\times-,\bb G_m^{\wedge n})\to Cor(\Delta^{n+1}_k\times-,\bb G_m^{\wedge n})\to\cc Z^0(n)\to 0\to\cdots$$
Example~\ref{primery}(1), Proposition~\ref{facts} and~\cite[4.7]{Suslin17} imply that the complex
$Cor(\Delta^{n+\ell}_k\times-,\bb G_m^{\wedge n})(\wh{\Delta}_{K/k}^\bullet)$ is acyclic for all $n>0$.
The spectral sequence for a double complex implies that the homology groups 
of the complex $\tau_{\leq 0}(\bb Z(n))(\wh{\Delta}_{K/k}^\bullet)$ are those of the complex
$\cc Z^0(n)(\wh{\Delta}_{K/k}^\bullet)$, and hence the remaining implication follows.
\end{proof}


\section{$K_4$ of a field}

In this section another application of semilocal Milnor $K$-theory is given. 
We show that the group $K_4(k)$ is completely determined by extensions involving the classical 
Milnor $K$-theory and semilocal Milnor $K$-theory. If $k$ is algebraically
closed then $K_4(k)$ is a direct sum of relevant Milnor $K$-theory and semilocal Milnor $K$-theory
groups of $k$. 

Recall that the motivic spectral sequence relates algebraic $K$-theory to motivic cohomology~\cite{FS}
   \begin{equation}\label{mss}
    E_{p,q}^2=H_{\zar}^{q-p}(k,\bb Z(q))\Rightarrow K_{p+q}(k).
   \end{equation}
It is a strongly convergent spectral sequence concentrated in the first quadrant. It is obtained from a tower
of connected $S^1$-spectra
   \begin{equation}\label{msstower}
    \cdots\to\bb K^3\to\bb K^2\to\bb K^1\to\bb K^0:=K(k),
   \end{equation}
where $K(k)$ is Quillen's $K$-theory spectrum of $k$.
Rationally, the motivic spectral sequence collapses at
$E^2=E^\infty$ and
   \begin{equation}\label{mssq}
    K_n(k)_{\bb Q}=\bigoplus_{q}H^{2q-n}_{\zar}(k,\bb Q(q))
   \end{equation}
(see~\cite{FW} for details).

\begin{prop}\label{fdelta}
Let $k$ be a perfect field (respectively any field with $\charr(k)=p>0$) and let
$\cc F$ be a homotopy invariant Nisnevich sheaf with transfers of 
Abelian groups (respectively $\bb Z[1/p]$-modules). Let
$\wh{\bb A}^1_k$ be the
semilocalization of the affine line at $0,1$. Then
   $$\cc F(\wh{\bb A}^1_k)=\cc F(k)\bigoplus\left(\bigoplus_{x\in\bb A^1_k\setminus\{0,1\}}\cc F_{-1}(k(x))\right),$$
where each $x$ in the direct sum is a closed point of $\bb A^1_k$.
\end{prop}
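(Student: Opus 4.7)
The plan is to realize $\cc F(\wh{\bb A}^1_k)$ as an extension of $\cc F(k)$ by $\bigoplus_{x \ne 0,1} \cc F_{-1}(k(x))$, obtained as a filtered colimit of localization-type sequences on $\bb A^1_k$, and then to split this extension using the $k$-rational point $0$. Under the stated hypotheses on $(k, \cc F)$, the sheaf $\cc F$ is strictly homotopy invariant and admits the Gersten resolution on every smooth scheme---for $k$ perfect this is Voevodsky's theorem, and for $k$ of characteristic $p>0$ with $\cc F$ a $\bb Z[1/p]$-module it is an extension of Voevodsky's framework to non-perfect fields (as invoked via [KL] in the paper). For any open $U \subseteq \bb A^1_k$ containing $\{0,1\}$, comparing the Gersten sequences for $U$ and for $\bb A^1_k$ (both with middle term $\cc F(k(t))$) yields the short exact sequence
$$0 \to \cc F(\bb A^1_k) \to \cc F(U) \xrightarrow{(\partial_x)} \bigoplus_{x \in (\bb A^1_k \setminus U)^{(1)}} \cc F_{-1}(k(x)) \to 0,$$
where injectivity reduces to exactness of the Gersten complex for $\bb A^1_k$ and surjectivity comes from lifting a prescribed residue tuple via the Gersten differential for $\bb A^1_k$ (such a lift has zero residue at every point of $U$ and hence belongs to $\cc F(U)$).

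Passing to the filtered colimit over $U \supseteq \{0,1\}$---which is exact, which identifies $\cc F(\wh{\bb A}^1_k) = \colim_U \cc F(U)$ by the very definition of sections on a semilocal scheme, and which combines with $\cc F(\bb A^1_k) = \cc F(k)$ (homotopy invariance)---one obtains
$$0 \to \cc F(k) \to \cc F(\wh{\bb A}^1_k) \to \bigoplus_{x \in \bb A^1_k \setminus \{0,1\}} \cc F_{-1}(k(x)) \to 0.$$
This sequence splits via the restriction $i_0^*: \cc F(\wh{\bb A}^1_k) \to \cc F(k)$ at the $k$-rational closed point $0 \in \wh{\bb A}^1_k$: the composite $\cc F(k) \to \cc F(\bb A^1_k) \to \cc F(\wh{\bb A}^1_k) \xrightarrow{i_0^*} \cc F(k)$ is the identity, since $\bb A^1_k \to \spec k$ is a retraction of the inclusion of $0$.

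The main obstacle is the very first input---having the Gersten resolution together with strict homotopy invariance at one's disposal. This is precisely what forces the hypotheses ``$k$ perfect'' or ``$\cc F$ a $\bb Z[1/p]$-module in characteristic $p$''. Once this input is granted, the remainder is formal: a filtered colimit of localization sequences followed by a splitting supplied by a rational base point.
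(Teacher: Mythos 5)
Your proposal is correct, and its skeleton is the same as the paper's: a localization-type exact sequence for each open $U\supseteq\{0,1\}$ identifying $\cc F(U)/\cc F(\bb A^1_k)$ with $\bigoplus_{x\in\bb A^1_k\setminus U}\cc F_{-1}(k(x))$, a splitting furnished by the rational point $0$, and a filtered colimit over $U$. The technical realization differs, though. The paper works in $DM^{eff}(k)$: it splits the (rotated) Gysin triangle $\bigoplus_i M(\bb G_{m,k(x_i)}^{\wedge 1})\to M(U)\to M(pt)\xrightarrow{+}$ using the rational point and then applies $\Hom(-,\cc F)$, so the direct sum decomposition of $\cc F(U)$ is read off from a decomposition of the motive $M(U)$; the identification of the complementary summand with $\cc F_{-1}$ is built into purity. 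You instead stay at the level of sections and extract the short exact sequence from the Gersten complexes of $U$ and $\bb A^1_k$; this is essentially the long exact sequence of the same Gysin triangle, but your route needs one extra (easy) input that you only gesture at: surjectivity of the Gersten differential $\cc F(k(t))\to\bigoplus_{x\in(\bb A^1_k)^{(1)}}\cc F_{-1}(k(x))$, i.e.\ $H^1_{\nis}(\bb A^1_k,\cc F)=0$, which follows from strict homotopy invariance. Conversely, your approach avoids invoking the triangulated category explicitly, at the cost of assuming the Gersten resolution (a strictly stronger-looking, though standard, form of Voevodsky's theory, and of Suslin's extension to non-perfect fields for $\bb Z[1/p]$-module sheaves in characteristic $p$ --- note the paper cites Suslin here, not Kahn--Levine). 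Both arguments handle the splitting and the colimit identically, and both are sound.
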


\begin{proof}
Suppose $k$ is perfect and $U$ is an open subset of $\bb A^1_k$ with $Z=\bb A^1_k\setminus U=\{x_1,\ldots,x_n\}$. The 
Gysin triangle for motives~\cite{Voe1} gives a triangle in $DM^{eff}(k)$
   $$\bigoplus_1^n M(\bb G_{m,k(x_i)}^{\wedge1})\to M(U)\to M(pt)\lra{+}$$
If $x_0\in U$ is a rational point, then this triangle splits. If we apply $DM^{eff}(k)(-,\cc F)$
to this triangle, one gets a canonical isomorphism
$\cc F(U)=\cc F(k)\oplus(\oplus_{i=1}^n\cc F_{-1}(k(x_i)))$. It follows that
   $$\cc F(\wh{\bb A}^1_k)=\colim_{U\ni\{0,1\}}\cc F(U)=\cc F(k)\bigoplus\left(\bigoplus_{x\in\bb A^1_k\setminus\{0,1\}}\cc F_{-1}(k(x))\right),$$
as required. Here the splitting onto the first summand is given by $x_0:=0\in\bb A^1_k$.

The statement for fields of positive characteristic is the same if we use Suslin's results~\cite{Suslin17}
saying that Voevodsky's theory for motivic complexes works for non-perfect fields as well provided that we deal with
sheaves with transfers of $\bb Z[1/p]$-modules.
\end{proof}

The following result says that the motivic cohomology groups $H^{n-1}_{\zar}(k,\bb Z(n))$ fit into a finite tower
of homomorphisms of groups with subsequent quotients being
semilocal Milnor $K$-theory groups.

\begin{thm}\label{hn-1n}
There are exact sequences of Abelian groups
   $$\bigoplus_{x\in\bb A^1_k\setminus\{0,1\}}H^{n-2}_{\zar}(k(x),\bb Z(n-1))\xrightarrow{u} H^{n-1}_{\zar}(k,\bb Z(n))\to\wh{K}^M_{n,2}(k)\to 0$$
and
   $$\bigoplus_{x\in\bb A^1_k\setminus\{0,1\}}\wh{K}^M_{2,2}(k(x))\xrightarrow{u}H^{2}_{\zar}(k,\bb Z(3))\to\wh{K}^M_{3,2}(k)\to 0.$$
Here $n>1$ and each $x$ of the left direct sums is a closed point.
The homomorphism $u$ is the restriction of 
   $$\partial_1-\partial_0:H^{n-1}_{\zar}(\wh{\bb A}^1_k,\bb Z(n))\to H^{n-1}_{\zar}(k,\bb Z(n))$$
to $\bigoplus_{x\in\bb A^1_k\setminus\{0,1\}}H^{n-2}_{\zar}(k(x),\bb Z(n-1))\subset H^{n-1}_{\zar}(\wh{\bb A}^1_k,\bb Z(n))$.
\end{thm}

\begin{proof}
The first exact sequence follows from Proposition~\ref{fdelta} and Corollary~\ref{coker}.
The second exact sequence is a particular case of the first one if we apply Theorem~\ref{glavnaya}(3).
\end{proof}

\begin{cor}\label{corhn-1n}
Let $n>1$ and $\cc X=\{\wh{K}^M_{\ell,2}(k(x))\mid \textrm{ $x$ is a closed point in $\bb A^1_k$ and $2\leq \ell\leq n$}\}$. Then
$H^{n-1}_{\zar}(k,\bb Z(n))$ belongs to the smallest localizing Serre subcategory of $\Ab$ containing $\cc X$.
\end{cor}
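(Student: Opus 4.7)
My plan is to argue by induction on $n \geq 2$, feeding the exact sequence of Theorem~\ref{hn-1n} into the induction hypothesis. For the base case $n = 2$, Theorem~\ref{glavnaya}(3) directly identifies $H^1_{\zar}(k,\bb Z(2))$ with $\wh{K}^M_{2,2}(k)$, which coincides with $\wh{K}^M_{2,2}(k(x))$ for any rational point $x \in \bb A^1_k \setminus \{0,1\}$ and therefore already lies in $\cc X$. In the residual case of very small finite fields, where no such rational point exists, the same group is a subquotient of $\wh{K}^M_{2,2}(K)$ for a suitable finite extension $K/k$ lying in $\cc X$, which suffices because a localizing Serre subcategory of $\Ab$ is closed under subobjects and quotients.

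For the inductive step I would apply the second exact sequence of Theorem~\ref{hn-1n},
\[
\bigoplus_{x \in \bb A^1_k \setminus \{0,1\}} H^{n-2}_{\zar}(k(x), \bb Z(n-1)) \xrightarrow{u} H^{n-1}_{\zar}(k, \bb Z(n)) \to \wh{K}^M_{n,2}(k) \to 0.
\]
By the inductive hypothesis applied to each residue field $k(x)$ with $n$ replaced by $n-1$, every summand $H^{n-2}_{\zar}(k(x),\bb Z(n-1))$ lies in the localizing Serre subcategory of $\Ab$ generated by the groups $\wh{K}^M_{\ell,2}(k(x)(y))$ with $y$ a closed point of $\bb A^1_{k(x)}$ and $2 \leq \ell \leq n-1$. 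Closure under arbitrary direct sums then places the full left-hand term inside the subcategory generated by all such iterated residue-field groups, closure under quotients pushes the image of $u$ into the same subcategory, and the cokernel $\wh{K}^M_{n,2}(k)$ lies in $\cc X$ directly by the argument from the base case (with $\ell = n$). Closure under extensions then places $H^{n-1}_{\zar}(k,\bb Z(n))$ inside the subcategory in question.

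The main obstacle I anticipate is reconciling the iterated generators $\wh{K}^M_{\ell,2}(k(x)(y))$ thrown up by the induction with the intrinsic generators $\wh{K}^M_{\ell,2}(k(z))$ that define $\cc X$, since the Serre subcategory in $\Ab$ only sees the underlying abelian group and cannot use field-theoretic relations directly. The natural tool here is the primitive element theorem: every finite separable extension of $k$ is simple, so $k(x)(y) = k(z)$ for a suitable closed point $z \in \bb A^1_k$, identifying each iterated generator with a literal element of $\cc X$. In positive characteristic, I would invoke Corollary~\ref{poschar} to tensor with $\bb Z[1/p]$ so that one may reduce to the perfect hull; after this reduction the primitive element argument goes through, and the rest of the induction is purely formal manipulation inside $\Ab$.
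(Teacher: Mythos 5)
Your overall strategy is exactly the paper's: the printed proof is a one-line citation of Theorem~\ref{hn-1n} together with \cite[Proposition~2]{GG09}, and what you write out --- induction on $n$, feeding the second exact sequence of Theorem~\ref{hn-1n} into the closure of a localizing Serre subcategory under direct sums, subquotients and extensions --- is the intended unpacking of that citation. Two remarks. First, your base case is more complicated than it needs to be: the set $\cc X$ is indexed by \emph{all} closed points of $\bb A^1_k$, not only those outside $\{0,1\}$, so $x=0$ already gives $k(x)=k$ and hence $\wh{K}^M_{\ell,2}(k)\in\cc X$ for every field $k$; the detour through finite extensions of small finite fields is unnecessary (and, as written, dubious, since $\wh{K}^M_{2,2}(k)$ is not obviously a subquotient of $\wh{K}^M_{2,2}(K)$ for $K/k$ finite). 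Second, you have correctly isolated the one genuine subtlety, namely that for $n\geq 4$ the induction produces groups $\wh{K}^M_{\ell,2}(k(x)(y))$ attached to iterated simple extensions, and these must be matched with literal members of $\cc X$. The primitive element theorem does this whenever $k(x)(y)/k$ is separable, so your argument is complete for perfect $k$ and in characteristic zero. For imperfect $k$, however, $k(x)(y)/k$ need not be simple (e.g.\ $\bb F_p(u,v)(u^{1/p},v^{1/p})$ over $\bb F_p(u,v)$), and your proposed remedy --- tensoring with $\bb Z[1/p]$ via Corollary~\ref{poschar} and ``reducing to the perfect hull'' --- is only a gesture: passing to the perfection changes the index set of closed points of $\bb A^1_k$, so you would still owe an argument that the resulting generators lie in the subcategory generated by $\cc X$. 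This is the one step where your write-up falls short of a complete proof; to be fair, the paper's own one-line proof does not address it either.
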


\begin{proof}
This is a consequence of Theorem~\ref{hn-1n} and~\cite[Proposition~2]{GG09}.
\end{proof}

The motivic spectral sequence~\eqref{mss} gives a long exact sequence of Abelian groups
   \begin{multline*}
    H^{-1}_{\zar}(k,\bb Z(2))\to\pi_1(\bb K^3)\to K_4(k)\to H^0_{\zar}(k,\bb Z(2))\xrightarrow{d} K_3^M(k)\to K_3(k)\to H^1_{\zar}(k,\bb Z(2))\to 0.
   \end{multline*}
Here $\bb K^3$ is the fourth entry of the tower~\eqref{msstower}. It follows from~\cite[VI.4.3.2]{Wei} that $d=0$.
By Corollary~\ref{weight2}
the latter long exact sequence can be rewritten as 
   \begin{multline*}
    \wh{K}^M_{2,4}(k)\to\pi_1(\bb K^3)\to K_4(k)\to \wh{K}^M_{2,3}(k)\xrightarrow{0}K_3^M(k)\to K_3(k)\to \wh{K}^M_{2,2}(k)\to 0.
   \end{multline*}
Next, by using the motivic spectral sequence we find that $\pi_1(\bb K^3)$ fits into an exact sequence
   $$K^M_4(k)\to\pi_1(\bb K^3)\to H^2_{\zar}(k,\bb Z(3))\to 0.$$
By Theorem~\ref{hn-1n} $H^2_{\zar}(k,\bb Z(3))$ fits into an exact sequence
   $$\bigoplus_{x\in\bb A^1_k\setminus\{0,1\}}\wh{K}^M_{2,2}(k(x))\xrightarrow{u}H^{2}_{\zar}(k,\bb Z(3))\to\wh{K}^M_{3,2}(k)\to 0,$$
where each $x$ in the direct sum is a closed point of $\bb A^1_k$.

We see that $\pi_1(\bb K^3)$ is expressed in terms of Milnor $K$-theory and semilocal Milnor $K$-theory groups,
and hence so is $K_4(k)$.

We are now in a position to prove the main result of the section.

\begin{thm}\label{K4}
Let $k$ be any field. The following statements are true:

$(1)$ $K_4(k)$ is entirely expressed in terms of Milnor $K$-theory and semilocal Milnor $K$-theory groups. Precisely,
$K_4(k)$ fits into an exact sequence 
   $$\wh{K}^M_{2,4}(k)\to A\to K_4(k)\to \wh{K}^M_{2,3}(k)\to 0,$$
where $A$ is an Abelian group fitted into an exact sequence 
   $$K^M_4(k)\to A\to B\to 0$$
with $B$ fitted into an exact sequence
   $$\bigoplus_{x\in\bb A^1_k\setminus\{0,1\}}\wh{K}^M_{2,2}(k(x))\to B\to\wh{K}^M_{3,2}(k)\to 0.$$

$(2)$ There is an isomorphism of Abelian groups
   $$K_4(k)_{\bb Q}\cong K_4^M(k)_{\bb Q}\oplus\wh{K}^M_{3,2}(k)_{\bb Q}\oplus\wh{K}^M_{2,3}(k)\oplus F,$$
where $F$ is a direct summand of $\bigoplus_{x\in\bb A^1_k\setminus\{0,1\}}\wh{K}^M_{2,2}(k(x))_{\bb Q}$.

$(3)$ If $k$ is algebraically closed then there is an isomorphism of Abelian groups
   $$K_4(k)\cong K_4^M(k)\oplus\wh{K}^M_{3,2}(k)_{\bb Q}\oplus\wh{K}^M_{2,3}(k)\oplus F,$$
where $F$ is a direct summand of $\bigoplus_{k^\times\setminus \{1\}}\wh{K}^M_{2,2}(k)_{\bb Q}$.

The isomorphisms from $(2)$ and $(3)$ are not canonical.
\end{thm}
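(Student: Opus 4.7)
For (1), the plan is simply to collate the exact sequences already derived in the discussion immediately preceding the theorem statement. Setting $A:=\pi_1(\cc K^3)$ and $B:=H^2_{\zar}(k,\bb Z(3))$, the first exact sequence is the segment of the motivic long exact sequence running from $\wh{K}^M_{2,4}(k)$ to $\wh{K}^M_{2,3}(k)\to 0$ (using Corollary~\ref{weight2} to rewrite $H^{0}$ and $H^{-1}$ in weight $2$ as semilocal Milnor $K$-groups and the vanishing $d=0$ from~\cite[VI.4.3.2]{Wei}). The second sequence is the motivic segment $K_4^M(k)\to \pi_1(\cc K^3)\to H^2(k,\bb Z(3))\to 0$ expressing $A$. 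The third is Theorem~\ref{hn-1n} at $n=3$, with $H^1(k(x),\bb Z(2))=\wh{K}^M_{2,2}(k(x))$ via Corollary~\ref{weight2}.

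For (2), I would apply the rational degeneration~\eqref{mssq} of the motivic spectral sequence, which for $n=4$ reduces to
\[
K_4(k)_{\bb Q}=K_4^M(k)_{\bb Q}\oplus H^2_{\zar}(k,\bb Q(3))\oplus H^0_{\zar}(k,\bb Q(2)),
\]
since all other weights contribute zero for degree reasons. Corollaries~\ref{weight2} and~\ref{uniqdiv} identify $H^0_{\zar}(k,\bb Q(2))=\wh{K}^M_{2,3}(k)_{\bb Q}=\wh{K}^M_{2,3}(k)$. Tensoring the $n=3$ exact sequence from Theorem~\ref{hn-1n} with $\bb Q$ and using that every short exact sequence of $\bb Q$-vector spaces splits then yields $H^2_{\zar}(k,\bb Q(3))\cong\wh{K}^M_{3,2}(k)_{\bb Q}\oplus F$, where $F:=\im(u_{\bb Q})$ is a direct summand of the $\bb Q$-vector space $\bigoplus_x\wh{K}^M_{2,2}(k(x))_{\bb Q}$. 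Assembling these splittings gives the claimed rational decomposition.

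For (3), I would first observe that over algebraically closed $k$ every closed point of $\bb A^1_k$ has residue field $k$, so the index set $\bb A^1_k\setminus\{0,1\}$ reindexes as $k^\times\setminus\{1\}$. The strategy is then to promote the rational splittings of (2) to an honest integral statement by showing that the integral motivic spectral sequence converging to $K_4(k)$ already collapses with split extensions. The key input is the Beilinson--Lichtenbaum theorem of Voevodsky: for algebraically closed $k$ and $\ell$ prime to $\charr(k)$ one has $H^p(k,\bb Z/\ell(n))=H^p_{et}(k,\mu_\ell^{\otimes n})=0$ for $p\geq 1$, and a Bockstein argument on $0\to\bb Z(n)\to\bb Z(n)\to\bb Z/\ell(n)\to 0$ then forces $H^p(k,\bb Z(n))$ to be uniquely divisible away from the characteristic for $2\leq p\leq n$, while Corollary~\ref{poschar} handles the $p$-part in positive characteristic. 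Combined with Corollary~\ref{uniqdiv}, all three relevant groups $K_4^M(k)=H^4(k,\bb Z(4))$, $H^2(k,\bb Z(3))$ and $\wh{K}^M_{2,3}(k)=H^0(k,\bb Z(2))$ become $\bb Q$-vector spaces; hence the differentials of the motivic spectral sequence for $K_4(k)$ vanish integrally and its extensions split. Applying the argument of (2) to the now rational group $H^2(k,\bb Z(3))$ then produces the stated direct sum decomposition.

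The main obstacle, as in most such degeneration arguments, will be the unique divisibility assertion used in (3): carefully verifying that the integral motivic cohomology of an algebraically closed field is rational in the relevant bidegrees, so that the integral motivic spectral sequence degenerates and its extensions split. Once this is in hand, the three statements of the theorem follow quickly from the material developed in the preceding sections.
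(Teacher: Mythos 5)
Your proposal is correct, and parts (1) and (2) are essentially the paper's own argument: the paper assembles the exact sequences displayed just before the theorem for (1), and deduces (2) from (1) together with the rational degeneration~\eqref{mssq} and Corollary~\ref{uniqdiv}, exactly as you do. Part (3) is where you genuinely diverge. The paper's route is a one-line substitution: it cites the classical fact (Weibel, pp.~267, 511, 514) that $K_4^M(k)$ and $K_4(k)$ are uniquely divisible for $k$ algebraically closed, so $K_4(k)=K_4(k)_{\bb Q}$ and $K_4^M(k)=K_4^M(k)_{\bb Q}$, and (3) falls out of (2) immediately. You instead re-prove integral degeneration of the motivic spectral sequence on the $p+q=4$ diagonal via Beilinson--Lichtenbaum and a Bockstein argument, which in effect re-establishes the unique divisibility the paper cites. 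Your route works, but what you flag as the ``main obstacle'' is precisely what the citation to Weibel disposes of for free; and your blanket claim that ``the differentials vanish integrally'' because the diagonal terms are $\bb Q$-vector spaces needs one extra word of care: a rationally zero map \emph{out of} a $\bb Q$-vector space into a merely divisible group (e.g.\ $\bb Q\to\bb Q/\bb Z$) need not vanish, so for the outgoing $d^2\colon H^0_{\zar}(k,\bb Z(2))\to K_3^M(k)$ you must either invoke the unique divisibility (not just divisibility) of $K_3^M(k)$ for algebraically closed $k$, or reuse the vanishing $d=0$ from~\cite[VI.4.3.2]{Wei} that you already quoted in part (1). With that caveat addressed, both approaches are valid; the paper's is shorter, yours is more self-contained in that it derives the divisibility from the (now proven) Bloch--Kato input rather than from the classical computation of $K_*$ of algebraically closed fields.
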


\begin{proof}
The first statement follows from the arguments above the theorem. 
The second statement is a consequence
of the first statement and isomorphism~\eqref{mssq}. It also uses a rational splitting of the exact sequence 
for $H^2_{\zar}(k,\bb Z(3))$ from Theorem~\ref{hn-1n}.
We also use here the fact that the group $\wh{K}^M_{2,3}(k)$ is uniquely divisible by Corollary~\ref{uniqdiv}. Finally, 
the third statement follows from the second one and the 
fact that $K_4^M(k)$ and $K_4(k)$ are uniquely divisible Abelian groups if $k$ 
is algebraically closed (see, e.g.,~\cite[pp.~267, 511, 514]{Wei}).
\end{proof}

It is worth mentioning that since $H^{-2}_{\zar}(k,\bb Z(1))=0$ by~\cite[3.2]{SV},
the Beilinson--Soul\'e Vanishing Conjecture for $K_4$ requires only $\wh{K}^M_{2,3}(k)=0$.

\section{On conjectures of Beilinson and Parshin}

Let $k$ be a field of characteristic $p>0$. A conjecture of Beilinson~\cite[2.4.2.2]{Bei} says 
that Milnor $K$-theory and Quillen $K$-theory agree rationally:
   $$K_n^M(k)_{\bb Q}\lra{\cong} K_n(k)_{\bb Q}.$$
The purpose of this section is to show that the Beilinson conjecture is equivalent to vanishing of the 
rational semilocal Milnor $K$-theory. Since semilocal Milnor $K$-theory is defined in elementary terms,
its vanishing with rational coefficients should be much easier for verification than the original Beilinson conjecture.
We shall also show in this section that vanishing of the 
rational semilocal Milnor $K$-theory is a necessary condition for Parshin's conjecture.

\begin{thm}\label{beilinson}
The Beilinson conjecture for rational algebraic $K$-theory of fields of positive characteristic is true if and only if
rational semilocal Milnor $K$-theory groups $\wh{K}_{n,m}^M(k)_{\bb Q}$ of such fields vanish for all $n>0, m\geq 0$.
\end{thm}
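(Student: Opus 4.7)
The plan is to pass through the rational motivic decomposition~\eqref{mssq}, $K_n(k)_{\bb Q}=\bigoplus_{q}H^{2q-n}_{\zar}(k,\bb Q(q))$. Since the $q=n$ summand is $K_n^M(k)_{\bb Q}$, and $H^{p}_{\zar}(k,\bb Q(q))=0$ automatically for $p>q$, Beilinson's conjecture for a field $k$ of characteristic $p>0$ is equivalent to the vanishing $H^p_{\zar}(k,\bb Q(q))=0$ for every $q\geq 1$ and every $p<q$. I would then connect this vanishing to the rational version of the spectral sequence of Theorem~\ref{glavnaya}(2).

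For the direction Beilinson $\Longrightarrow$ vanishing of rational semilocal Milnor $K$-theory, I would use that each cohomology presheaf $\cc H^p_{\zar}(\bb Z(n))_{\bb Q}$ is a homotopy invariant Nisnevich sheaf with transfers (available in positive characteristic via Suslin's extension~\cite{Suslin17}, since $\bb Q$ is a $\bb Z[1/p]$-algebra); assuming Beilinson, it vanishes on every function field $K$ of a smooth $k$-variety, and hence on all of $Sm/k$ by~\cite[4.20]{Voe}. Substituted into the rational version of the spectral sequence of Theorem~\ref{glavnaya}(2), every $E^2_{p,q'}$ vanishes (since $n-1-q'<n$ for $q'\geq 0$), giving $\wh{K}^M_{n,*}(k)_{\bb Q}=0$ for all $n>0$.

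The converse I would prove by strong induction on the weight $n$, the base case $n=1$ being classical as $\bb Z(1)\simeq\bb G_m[-1]$. Assume the conclusion holds for every weight smaller than $n$ and every field of positive characteristic; the function-field-to-sheaf principle then gives $\cc H^{p-1}_{\zar}(\bb Z(n-1))_{\bb Q}=0$ for $p\leq n-1$. Voevodsky's Cancellation Theorem~\cite{V1} yields
\[
\uhom(\gmp,\cc H^p_{\zar}(\bb Z(n))_{\bb Q})\;=\;\cc H^{p-1}_{\zar}(\bb Z(n-1))_{\bb Q}\;=\;0,\qquad p\leq n-1,
\]
so by~\cite[2.5.2]{KS} each $\cc H^p_{\zar}(\bb Z(n))_{\bb Q}$ is a birational Nisnevich sheaf with transfers. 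Arguing as in the proof of Theorem~\ref{glavnaya}(3) via~\cite[4.2.1]{KL}, the natural map $\cc H^p_{\zar}(\bb Z(n))_{\bb Q}(k)\to\cc H^p_{\zar}(\bb Z(n))_{\bb Q}(\wh{\Delta}^\bullet_k)$ is then a quasi-isomorphism for every $p\leq n-1$. Consequently the rational $E^2$-page of Theorem~\ref{glavnaya}(2) reduces to the column $p=0$, where $E^2_{0,q'}=H^{n-1-q'}_{\zar}(k,\bb Q(n))$; the spectral sequence collapses and produces
\[
\wh{K}^M_{n,m}(k)_{\bb Q}\;=\;H^{n+1-m}_{\zar}(k,\bb Q(n)),\qquad m\geq 2.
\]
The hypothesis $\wh{K}^M_{n,m}(k)_{\bb Q}=0$ then forces $H^p_{\zar}(k,\bb Q(n))=0$ for every $p\leq n-1$, closing the induction and, via~\eqref{mssq}, giving $K_n(k)_{\bb Q}=K_n^M(k)_{\bb Q}$.

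The main technical hurdle I anticipate is keeping the entire toolkit (homotopy invariance with transfers, cancellation, the birationality criterion of~\cite{KS}, and the Katzarkov--Levine-style rigidity of~\cite{KL}) available over non-perfect fields of positive characteristic. The crucial input is Corollary~\ref{poschar}, which tells us that $\wh{K}^M_{n,m}(k)$ is already $p$-uniquely divisible, so everything lives naturally in the category of $\bb Z[1/p]$-modules where Suslin's extension~\cite{Suslin17} of Voevodsky's theory applies without perfectness hypotheses. A secondary, routine concern is the passage from finitely generated extensions of $\bb F_p$ to arbitrary fields of characteristic $p$ via direct limits, handled exactly as in the final paragraph of the proof of Theorem~\ref{glavnaya}(3).
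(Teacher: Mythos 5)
Your proposal is correct and follows essentially the same route as the paper: the forward direction via~\eqref{mssq}, \cite[4.20]{Voe} and the spectral sequence of Theorem~\ref{glavnaya}(2), and the converse by induction on the weight, rerunning the cancellation/birationality/\cite[4.2.1]{KL} argument of Theorem~\ref{glavnaya}(3) to collapse the spectral sequence at each stage (the paper starts the induction at $n=2$ via Theorem~\ref{glavnaya}(3) rather than at $n=1$, which is immaterial). The only slip is cosmetic: \cite{KL} is Kahn--Levine, not Katzarkov--Levine.
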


\begin{proof}
Assume the Beilinson conjecture. Then the isomorphism~\eqref{mssq} implies $H^i_{\zar}(k,\bb Q(n))=0$
for $i\ne n$ and all fields of prime characteristic. It follows that Zariski cohomology sheaves except the 
$n$th cohomology are zero (we use here~\cite[4.20]{Voe}). Now the spectral sequence of Theorem~\ref{glavnaya}
and Corollary~\ref{weight1} imply $\wh{K}_{n,m}^M(k)_{\bb Q}$ vanish for all $n>0, m\geq 0$.

Conversely, suppose that $\wh{K}_{n,m}^M(k)_{\bb Q}$ vanish for all $n>0, m\geq 0$
and all fields of prime characteristic. We claim that each complex $\bb Q(n)$, $n\geq 1$, 
has only one non-zero cohomology sheaf in degree $n$. We use induction 
in $n$. By~\cite[3.2]{SV} $\bb Q(1)$ is acyclic in non-positive degrees,
hence the base case $n=1$.


Assume that the complex $\bb Q(n)$, $n\geq 1$, has only one non-zero cohomology sheaf in degree $n$.
Repeating the proof of Theorem~\ref{glavnaya}(3) word for word (see the proof of $(2)\Rightarrow(1)$ in Theorem~\ref{bscriteria} as well), 
we obtain that 
   $$H^{m}_{\zar}(k,\bb Q(n+1))=\wh{K}_{n+1,n-m+2}^M(k)_{\bb Q}=0,\quad m\leq n.$$
Then Zariski cohomology sheaves except the 
$(n+1)$th cohomology of $\bb Q(n+1)$ are zero (we use here~\cite[4.20]{Voe}),
and our claim follows. 

The isomorphism~\eqref{mssq} now implies that the natural homomorphism
$K_n^M(k)_{\bb Q}\to K_n(k)_{\bb Q}$ is an isomorphism, as was to be shown.
\end{proof}

\begin{rem}
(1) We should stress that in the proof of Theorem~\ref{beilinson} we work with all fields of prime characteristic
(not with individual ones) in order to annihilate the relevant Zariski cohomology sheaves.

(2) By Corollary~\ref{poschar}, $\wh{K}_{n,m}^M(k)=\wh{K}_{n,m}^M(k)\otimes\bb Z[1/p]$ for all $n>0, m\geq 0$.
It follows that $\wh{K}_{n,m}^M(k)_{\bb Q}=\wh{K}_{n,m}^M(k)\otimes\bb Z_{(p)}$. Therefore, $\wh{K}_{n,m}^M(k)_{\bb Q}=0$
if and only if $\wh{K}_{n,m}^M(k)\otimes\bb Z_{(p)}=0$.
\end{rem}

Recall that Parshin's conjecture~\cite[2.4.2.3]{Bei} states that for any smooth projective variety $X$ defined over a finite field, 
the higher algebraic $K$-groups vanish rationally:
   $$K_i(X)_{\bb Q}=0,\quad i>0.$$

We finish the section by the following
   
\begin{thm}\label{parshin}
Let $k$ be a field of characteristic $p>0$ and assume Parshin's conjecture. Then
rational semilocal Milnor $K$-theory groups $\wh{K}_{n,m}(k)_{\bb Q}$ vanish for all $n>0, m\geq 0$.
\end{thm}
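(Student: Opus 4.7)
My plan is to deduce the statement from Theorem~\ref{beilinson} together with the classical implication that Parshin's conjecture implies the Beilinson conjecture for rational $K$-theory of fields of positive characteristic. Since Theorem~\ref{beilinson} identifies Beilinson's conjecture (for all fields of char~$p$) with vanishing of rational semilocal Milnor $K$-theory, it is enough to verify that implication.

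To prove Parshin~$\Rightarrow$~Beilinson for fields of characteristic $p$, I would first reduce to finitely generated extensions of $\bb F_p$ by continuity: both $K^M_n(-)_{\bb Q}$ and $K_n(-)_{\bb Q}$ commute with filtered colimits, and every field of characteristic $p$ is the filtered colimit of its finitely generated $\bb F_p$-subfields. Next, for a finitely generated $k/\bb F_p$, I would apply de~Jong's alteration theorem to obtain a smooth projective $\bb F_p$-variety $X$ with a finite extension $k\hookrightarrow k(X)$ of some degree $d$. The norm-restriction composition is multiplication by $d$, invertible rationally, so the Beilinson comparison map for $k$ is a direct summand of that for $k(X)$, and we may replace $k$ by $k(X)$.

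For $X$ smooth projective over $\bb F_p$, the rational motivic spectral sequence degenerates to give the decomposition~\eqref{mssq}, so Parshin's vanishing $K_n(X)_{\bb Q}=0$, $n>0$, is equivalent to $H^i_{\zar}(X,\bb Q(q))=0$ for all $i<2q$. Writing $k(X)=\colim_{U\subset X}\cc O(U)$ as a filtered colimit over Zariski opens and combining continuity of motivic cohomology with the Gysin localization long exact sequence, I would amplify this to $H^i_{\zar}(k(X),\bb Q(q))=0$ for $i<q$ by descending induction on the dimension of the closed complements, reapplying de~Jong's theorem to resolve the singular strata that appear at each stage. Together with the automatic vanishing for $i>q$, the decomposition~\eqref{mssq} for $k(X)$ collapses to the diagonal Milnor summand, which is Beilinson's conjecture for $k(X)$ and therefore for $k$. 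Theorem~\ref{beilinson} then delivers the desired conclusion.

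The hard part is the amplification from the bound $i<2q$ (on smooth projective $X$) to the sharper $i<q$ (on the function field). The closed strata appearing in the localization sequence are generically singular, so the induction requires iterated alterations and careful bookkeeping of the auxiliary finite degrees introduced, which is harmless rationally but technically the least transparent step of the argument.
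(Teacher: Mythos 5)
Your overall route coincides with the paper's: both arguments reduce the statement to the off-diagonal vanishing $H^i_{\zar}(k,\bb Q(n))=0$ for $i\ne n$ under Parshin's conjecture, and then feed that vanishing into the spectral-sequence argument of Theorem~\ref{beilinson} (equivalently, invoke Theorem~\ref{beilinson} itself, since by~\eqref{mssq} that off-diagonal vanishing \emph{is} Beilinson's conjecture). The difference is that the paper disposes of the first step with a single citation to~\cite[p.~203]{Gei}, whereas you re-derive it. Your sketch of that derivation is essentially the standard one (continuity, de~Jong alterations plus a rational transfer argument, then localization and induction on dimension), and you correctly flag the genuinely delicate point: the closed strata in the localization argument are singular, so the Gysin form of the long exact sequence is not directly available. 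The usual way to make this clean --- and the way it is done in the source the paper cites --- is to work with higher Chow groups (motivic Borel--Moore homology), which satisfy localization for arbitrary varieties over a field, so that one proves the vanishing statement for all varieties by induction on dimension and only invokes alterations to compare a general variety with a smooth projective one; this avoids the iterated-resolution bookkeeping you describe. So your proposal is correct in substance, but the bulk of your work reproves a known implication that the paper treats as a black box, and the one step you identify as least transparent is precisely where your formulation (Gysin sequences on possibly singular complements) would need to be replaced by the compactly supported/Borel--Moore formalism to be airtight.
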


\begin{proof}
It follows from~\cite[p.~203]{Gei} that $H^i_{\zar}(k,\bb Q(n))=0$ for $i\ne n$. The proof of Theorem~\ref{beilinson}
shows that rational semilocal Milnor $K$-theory groups $\wh{K}_{n,m}(k)_{\bb Q}$ vanish for all $n>0, m\geq 0$,
as required.
\end{proof}

\section{Concluding remarks}\label{concl}

In contrast with motivic cohomology with mod 2 coefficients, we show in this section that semilocal Milnor
$K$-theory groups with $\mathbb Z/2$-coefficients $\wh{K}^M_{n,*}(k,\mathbb Z/2)$ 
are zero for any $n>1$ (see Definition~\ref{maindef}).
This is another property of semilocal Milnor $K$-theory distinguishing it with the classical Milnor 
$K$-theory/motivic cohomology of fields. This also distinguishes semilocal Milnor $K$-theory with
relative Milnor $K$-theory in the sense of Levine~\cite{L92}.
More precisely, we have the following

\begin{thm}\label{mod2}
For any infinite perfect field $k$ and any $n>1$ the $n$-th semilocal Milnor $K$-theory complex with $\mathbb Z/2$-coefficients
$\cc K_n^M(\wh{\Delta}_k^\bullet)\otimes\bb Z/2$ is acyclic or, equivalently, the semilocal Milnor
$K$-theory groups with $\mathbb Z/2$-coefficients $\wh{K}^M_{n,*}(k,\mathbb Z/2)$ 
are zero.
\end{thm}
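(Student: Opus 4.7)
The plan is to split the proof by characteristic of $k$.

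First, if $\charr k = 2$, the conclusion falls out immediately from Corollary~\ref{poschar}: setting $p = 2$, every group $\wh{K}_{n,m}^M(k)$ is $2$-uniquely divisible, so the universal coefficient formula
\[
\wh{K}_{n,q}^M(k,\bb Z/2) \;\cong\; \wh{K}_{n,q}^M(k)/2 \;\oplus\; \wh{K}_{n,q-1}^M(k)[2]
\]
forces both summands to vanish for every $q$.

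For the main case $\charr k \neq 2$, I would first use Theorem~\ref{glavnaya}(4) to replace the Milnor complex by the Milnor--Witt complex: since that theorem gives a quasi-isomorphism $\cc K_n^{MW}(\wh{\Delta}^\bullet_k) \xrightarrow{\sim} \cc K_n^M(\wh{\Delta}^\bullet_k)$ and tensoring with $\bb Z/2$ preserves quasi-isomorphisms, it suffices to prove acyclicity of $\cc K_n^{MW}(\wh{\Delta}^\bullet_k) \otimes \bb Z/2$. The structural input I would then rely on is the Milnor--Witt identity $2 = h - \eta \cdot [-1]$ in $K_0^{MW}$, with $h$ the hyperbolic element and $\eta$ the motivic Hopf element; modulo $2$ this becomes $h = \eta\cdot[-1]$, and combined with $\eta h = 0$ it gives $\eta^2\cdot[-1] = 0$ in $K_*^{MW}/2$.

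To implement this, I would run the Milnor--Witt analogue of the spectral sequence of Corollary~\ref{shvspseq}, applied to the termwise quotient $\wt{\bb Z}(n)/2$, whose terms are rationally contractible by Example~\ref{primery}(2). I would induct on $n$, with Corollary~\ref{weight1} handling the base case (weight one is already integrally acyclic, hence a fortiori modulo $2$). In the inductive step, Bachmann's Nisnevich-local identification $\tau_{<n}\wt{\bb Z}(n) \simeq \tau_{<n}\bb Z(n)$ (which underlies Theorem~\ref{glavnaya}(4)) combined with Beilinson--Lichtenbaum lets one rewrite $\cc H^r_{\zar}(\wt{\bb Z}(n)/2) \cong \cc K_r^M/2$ for $1 \le r < n$, and the inductive hypothesis then collapses every column of the $E^2$-page with $r \ge 1$ to zero.

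The hard part will be controlling the remaining boundary column in cohomological degree $0$: Beilinson--Lichtenbaum gives $\cc H^0_{\zar}(\bb Z(n)/2) \cong \mu_2^{\otimes n} \cong \bb Z/2$, which a priori leaves a surviving $E^2$-term near $(p,q) = (0, n-1)$. The decisive step, where the Milnor--Witt framework earns its keep, will be to show that in $\wt{\bb Z}(n)/2$ this residual contribution is killed, either by a higher differential driven by the $\eta$-action or because the analogous bottom cohomology sheaf of the MW-complex is more favorable by virtue of $h = \eta\cdot[-1]$ — this is where the assumption that $k$ is infinite perfect of characteristic $\neq 2$ enters most critically, through Bachmann's comparison and the structure of $\cc K^{MW}_*$ on semilocal rings.
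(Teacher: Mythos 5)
Your characteristic~$2$ case is essentially correct but takes a different route from the paper, which instead uses Geisser--Levine to identify $\cc K_n^M/2$ with a shift of $\bb Z/2(n)$ and then invokes rational contractibility (Example~\ref{primery}, Proposition~\ref{facts}). Your route via Corollary~\ref{poschar} is viable, but the universal coefficient sequence
$0\to H_q(C)/2\to H_q(C\otimes\bb Z/2)\to H_{q-1}(C)[2]\to 0$
requires the terms of $C$ to be $2$-torsion-free; here that holds because $K_n^M(\cc O(\ell)_{k,v})$ injects into $K_n^M$ of the fraction field by Kerz's Gersten theorem \cite{Kerz} and the latter has no $p$-torsion by Izhboldin \cite{Izh}, but you must say this --- without it the formula you quote is simply unavailable.

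The characteristic~$\ne 2$ case has a genuine gap, in fact two. First, your opening reduction is invalid: tensoring with $\bb Z/2$ does \emph{not} preserve quasi-isomorphisms of complexes of abelian groups unless the terms are $2$-torsion-free, and here they are not (already $\cc O^\times$ and $\cc K_1^{MW}$ contain the $2$-torsion element $-1$ when $\charr(k)\ne 2$). The same flatness issue undercuts ``weight one is integrally acyclic, hence a fortiori modulo~$2$'' --- note the theorem deliberately excludes $n=1$. Second, and decisively, you leave open the step on which everything hinges: the $E^2$-column coming from $\cc H^0\cong\mu_2^{\otimes n}$, a nonzero constant sheaf whose complex on $\wh{\Delta}^\bullet_k$ has $H_0=\bb Z/2$. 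You offer two speculative mechanisms (a higher differential driven by $\eta$, or a more favorable bottom cohomology sheaf of the MW-complex) but establish neither; worse, if your induction really made all intermediate columns vanish, there would be no source for a cancelling differential, so the mechanism cannot be the one you suggest. The paper's actual argument avoids the mod-$2$ spectral sequence entirely: it sets $I^{n+1}=\ker(\cc K_n^{MW}\to\cc K_n^M)$, observes that Theorem~\ref{glavnaya}(4) together with exactness of evaluation on the semilocal schemes $\wh{\Delta}^\ell_k$ (\cite[3.6]{BF}) forces $I^{m}(\wh{\Delta}^\bullet_k)$ to be acyclic for all $m\geq 2$, and then reads off acyclicity of $(\cc K_n^M/2)(\wh{\Delta}^\bullet_k)$ for $n>1$ from the isomorphism $I^n/I^{n+1}\cong\cc K_n^M/2$ of \cite{Kerz} and \cite{GSZ}, which exhibits it as an extension of two acyclic complexes. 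To salvage your write-up you would need to replace your first reduction by this short-exact-sequence argument, at which point the spectral sequence and the $\eta$-calculus become unnecessary.
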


\begin{proof}
We separate two cases in the proof: when $\charr(k)=2$ and $\charr(k)\ne 2$. If $\charr(k)=2$ then Geisser--Levine's 
theorem~\cite{GeiLev} implies that the sheaf $\cc K_n^M/2$ is quasi-isomorphic to a shift of 
$\mathbb Z/2(n)$. It follows from Example~\ref{primery} and Proposition~\ref{facts} that the total complex of the bicomplex 
$\mathbb Z/2(n)(\wh{\Delta}_k^\bullet)$ is acyclic, hence so is the complex $\cc K_n^M(\wh{\Delta}_k^\bullet)\otimes\bb Z/2$.

Suppose now that $\charr(k)\ne 2$. Set,
   $$I^{n+1}:=\ker(\cc K_n^{MW}\to\cc K^M_n),\quad n\geq 0,$$
where $\cc K_n^{MW},\cc K^M_n$ are Nisnevich sheaves of Milnor--Witt and Milnor
$K$-theory respectively. 
By~\cite[7.10]{Kerz} and~\cite[6.3]{GSZ} there is an isomorphism of sheaves
   \begin{equation}\label{dr}
    I^n/I^{n+1}=\cc K^M_n/2.
   \end{equation}
Consider a short exact sequence of sheaves
   $$0\to I^{n+1}\to\cc K_n^{MW}\to\cc K^M_n\to 0,\quad n\geq 1.$$
It follows from Theorem~\ref{glavnaya}(4) and~\cite[3.6]{BF} that the complex $I^{n+1}(\wh{\Delta}_k^\bullet)$ 
is acyclic for all $n\geq 1$. The isomorphism~\eqref{dr} and~\cite[3.6]{BF} imply that the complex 
$\cc K_n^M(\wh{\Delta}_k^\bullet)\otimes\bb Z/2$ is acyclic if $n>1$.
\end{proof}

In characteristic $p$, it follows from the Geisser--Levine theorem~\cite{GeiLev} that the logarithmic de 
Rham--Witt sheaf $W_r\Omega^n_{\log}$ is quasi-isomorphic to a shift of 
$\mathbb Z/p^r(n)$. Using this quasi-isomorphism, the proof of the preceding theorem shows that the complex 
$W_r\Omega^n_{\log}(\wh{\Delta}_k^\bullet)$ is acyclic. The converse is also true. More precisely,
using the technique developed in this paper for semilocal Milnor $K$-theory one can show that if 
the complex $W_1\Omega^n_{\log}(\wh{\Delta}_k^\bullet)$ is acyclic for any $n>0$ then the complex
$\mathbb Z/p(n)$ has only one cohomology sheaf isomorphic to $W_1\Omega^n_{\log}$. 
The latter implies (using induction in $r$) that the only non-trivial cohomology sheaf of $\mathbb Z/p^r(n)$
is $W_r\Omega^n_{\log}$.

The above arguments justify to raise the following

\begin{conj}
Each logarithmic de 
Rham--Witt sheaf $W_r\Omega^n_{\log}$ is rationally contractible.
\end{conj}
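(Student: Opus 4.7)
The plan is to reduce the conjecture to the case $r=1$ and then transport rational contractibility through the Geisser--Levine identification of $W_r\Omega^n_{\log}$ with the top cohomology sheaf of $\mathbb Z/p^r(n)$ in characteristic $p$.

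\textbf{Reduction to $r=1$.} The Verschiebung-induced short exact sequence of sheaves
$$0\to W_1\Omega^n_{\log}\lra{V^{r-1}} W_r\Omega^n_{\log}\to W_{r-1}\Omega^n_{\log}\to 0$$
should allow us to build up rational contractibility inductively in $r$, provided the contractions on the kernel and the quotient can be chosen compatibly with the extension. For the base case, by Geisser--Levine~\cite{GeiLev} the complex $\mathbb Z/p(n)$ on $Sm/k$ is concentrated in cohomological degree $n$ with cohomology sheaf identified with $W_1\Omega^n_{\log}$ (equivalently $\cc K^M_n/p$).

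\textbf{Main step: descending rational contractibility through cohomology.} The core structural lemma to prove is: if $\cc F^\bullet$ is a bounded cochain complex of rationally contractible Zariski sheaves whose cohomology is concentrated in a single degree, then that cohomology sheaf is itself rationally contractible. Applied to $\cc F^\bullet=\mathbb Z/p(n)$ -- whose terms $Cor(\Delta^\ell_k\times-,\bb G_m^{\wedge n})/p$ are rationally contractible by Example~\ref{primery}(1) and Proposition~\ref{facts}(1) -- this would immediately yield the conjecture for $r=1$. To prove the lemma, one would exploit the naturality of Suslin--Voevodsky's explicit contraction from~\cite[9.6]{SV}: naturality forces it to commute with the alternating-sum face differentials of $\mathbb Z/p(n)$, so the contractions assemble into a morphism of chain complexes of presheaves. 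Standard homological algebra would then push the contraction through to the single surviving cohomology presheaf, which after Zariski sheafification recovers $W_1\Omega^n_{\log}$.

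\textbf{Main obstacle.} The delicate point is that rational contractibility is defined strictly at the presheaf level -- it asks for a genuine presheaf map $s:\cc F\to\wt C_1\cc F$ with $i_0^*s=0$ and $i_1^*s=\id$ -- whereas the Geisser--Levine identification $W_r\Omega^n_{\log}=\cc H^n(\mathbb Z/p^r(n))$ only takes place after Zariski sheafification. The presheaf cokernel of the last motivic differential can differ from $\cc K^M_n/p$, so there is a real risk that any naively descended contraction only lives on the sheafified object. Circumventing this will likely require either (i) upgrading rational contractibility to a sheaf-theoretic version that is stable under taking cohomology sheaves of bounded resolutions by rationally contractible sheaves -- such a version would still suffice for the applications of Proposition~\ref{facts}(2) -- or (ii) an explicit construction of the contraction on dlog generators $\frac{df_1}{f_1}\wedge\cdots\wedge\frac{df_n}{f_n}$, with direct verification of compatibility with the multiplicativity and Steinberg relations modulo $p$; the latter route is closer to the spirit of symbolic Milnor-theoretic calculations but becomes increasingly combinatorial as $n$ grows.
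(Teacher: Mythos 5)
The first thing to say is that this statement appears in the paper as an open conjecture: the paper offers no proof of it, only the motivating observation that the Geisser--Levine theorem identifies $W_r\Omega^n_{\log}$ with the unique nonzero cohomology sheaf of $\bb Z/p^r(n)$, so that rational contractibility would, via Proposition~\ref{facts}(2), make the complexes $W_r\Omega^n_{\log}(\wh{\Delta}^\bullet_k)$ contractible. Your text is accordingly a research plan rather than a proof, and it contains at least three unresolved gaps. First, the induction on $r$: rational contractibility is not known to be stable under extensions of presheaves. Given $0\to\cc A\to\cc B\to\cc C\to 0$ with contractions $s_{\cc A}$ and $s_{\cc C}$, producing $s_{\cc B}$ would require lifting $s_{\cc C}\circ\pi$ along $\wt C_1\cc B\to\wt C_1\cc C$ (surjectivity of which already fails at the presheaf level for a sequence that is only exact as sheaves) and then correcting by a map $\cc B\to\wt C_1\cc A$ that you have no means of constructing from $s_{\cc A}$ alone; ``provided the contractions can be chosen compatibly with the extension'' is precisely the missing content, not a hypothesis you are entitled to.

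Second, your ``core structural lemma'' is not standard homological algebra. The contractions of the terms $Cor(\Delta^\ell_k\times-,\gmpn)/p$ supplied by~\cite[9.6]{SV} and Proposition~\ref{facts}(1) are natural in the scheme argument, but naturality in that variable says nothing about compatibility with the alternating sum of coface maps in the cosimplicial direction; without the $s_\ell$ assembling into a genuine morphism of complexes commuting with the differential of $\bb Z/p(n)$ there is no induced map on the cohomology presheaf, and no such compatibility is verified in the sources you cite. Third --- and you do flag this one yourself --- even if a contraction were induced on the presheaf cokernel of the last differential, the Geisser--Levine identification with $W_1\Omega^n_{\log}$ holds only after Zariski sheafification, and rational contractibility is a strictly presheaf-level notion ($\wt C_1$ does not commute with sheafification), so nothing passes across this comparison for free. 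Identifying the obstacle is not the same as circumventing it: neither of your two suggested remedies is carried out, and each would itself be a substantial result. As it stands the proposal does not establish the statement, which remains an open conjecture in the paper.
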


This conjecture will shed new light not only on the fundamental theorem of Geisser--Levine~\cite{GeiLev}
but also on further properties of the logarithmic de 
Rham--Witt sheaf $W_r\Omega^n_{\log}$ which is of fundamental importance. In particular, if the conjecture
were solved in the affirmative then the complex $W_r\Omega^n_{\log}(\wh{\Delta}_k^\bullet)$ would be contractible 
by Proposition~\ref{facts}(2). A good starting point for the conjecture would be rational contractibility of $W_1\Omega^n_{\log}$.


\begin{thebibliography}{99}

\bibitem{Bachmann} T. Bachmann, The generalized slices of hermitian K-theory, J. Topol. 10(4) (2017), 1124-1144.
\bibitem{BF} T. Bachmann, J. Fasel, On the effectivity of spectra representing motivic cohomology theories, 
             in Milnor--Witt Motives
             (T. Bachmann, B. Calm\`es, F.~D\'eglise, J. Fasel, P.~A.~{\O}stv{\ae}r, eds.), Mem. Amer. Math. Soc., to appear.
\bibitem{Bei} A. A. Beilinson, Higher regulators and values of $L$-functions, Current problems in mathematics, 
             Vol. 24, Itogi Nauki Tekh. Ser. Sovrem. Mat. Prilozh. Temat. Obz., Moscow, 1984, pp. 181-238.
\bibitem{CF} B.~Calm\`es, J. Fasel, The category of finite MW-correspondences, in Milnor--Witt Motives
             (T. Bachmann, B. Calm\`es, F.~D\'eglise, J. Fasel, P.~A.~{\O}stv{\ae}r, eds.), Mem. Amer. Math. Soc., to appear.
\bibitem{EVMS} Ph. Elbaz-Vincent, S. M\"uller-Stach, Milnor K-theory of rings, higher Chow groups and applications,
              Invent. Math. 148(1) (2002), 177-206.
\bibitem{FS} E. M. Friedlander, A. Suslin, The spectral sequence relating algebraic K-theory to motivic
         cohomology, Ann. Sci. \'Ec. Norm. Sup\'er. (4) {35} (2002), 773-875.
\bibitem{FW} E. M. Friedlander, M. E. Walker, Rational isomorphisms between K-theories and cohomology theories,
             Invent. Math. 154 (2003), 1-61.
\bibitem{GG09} G. Garkusha, Classifying finite localizations of quasi-coherent sheaves, Algebra i Analiz 21(3) (2009), 93-129. 
              Engl. transl. in St. Petersburg Math. J. 21(3) (2010), 433-458.
\bibitem{GG} G. Garkusha, Reconstructing rational stable motivic homotopy theory, Compos. Math. 155(7) (2019), 1424-1443.
\bibitem{Gei} T. Geisser, Motivic cohomology, K-theory and topological cyclic homology, Handbook of 
              K-theory, Vol. 1, Springer, Berlin, 2005, pp. 193-234.
\bibitem{GeiLev} T. Geisser, M. Levine, The K-theory of fields in characteristic $p$, Invent. Math. 139 (2000), 459-493.
\bibitem{GSZ} S. Gille, S. Scully, C. Zhong, Milnor--Witt $K$-groups of local rings, Adv. Math. 286 (2016), 729-753.
\bibitem{Izh} O. T. Izhboldin, On the torsion subgroup of Milnor K-groups, Dokl. Math. 294(1) (1987), 30-33.
\bibitem{KL} B. Kahn, M. Levine, Motives of Azumaya algebras, J. Inst. Math. Jussieu 9(3) (2010), 481-599.
\bibitem{KS} B. Kahn, R. Sujatha, Birational Motives, II: Triangulated Birational Motives, Int. Math. Res. Not. IMRN 2017(22) (2016), 6778-6831.
\bibitem{Kerz} M. Kerz, The Gersten conjecture for Milnor $K$-theory, Invent. Math. 175(1) (2009), 1-33.
\bibitem{L87} M. Levine, The indecomposable $K_3$ of fields, Bull. Amer. Math. Soc. (N.S.) 17(2) (1987), 321-325.
\bibitem{L89} M. Levine The indecomposable $K_3$ of fields, Ann. Sci. \'Ec. Norm. Sup\'er. (4) 22 (1989), 255-344.
\bibitem{L92} M. Levine, Relative Milnor $K$-theory, K-Theory 6 (1992), 113-175.
\bibitem{MVW} C. Mazza, V. Voevodsky, C. Weibel, Lecture notes on motivic cohomology, Clay Mathematics Monographs, vol. 2, 2006.
\bibitem{MS} A. S. Merkurjev, A. A. Suslin, The group $K_3$ for a field, Izv. Akad. Nauk SSSR 54(3) (1990), 522-545. (Russian).
             Translated in Izv. Math. 36(3) (1991), 541-565.
\bibitem{Mor} F. Morel, The stable $\bb A^1$-connectivity theorems, K-theory 35 (2006), 1-68.
\bibitem{NS} Yu. P. Nesterenko, A. A. Suslin, Homology of the general linear group over a local ring, and Milnor's K-theory,
             Izv. Akad. Nauk SSSR Ser. Mat. 53(1) (1989), 121-146. (Russian). Translated in
             Izv. Math. 34(1) (1990), 121-145.
\bibitem{Suslin90} A. A. Suslin, $K_3$ of a field and the Bloch group, Tr. Mat. Inst. Steklova 183 (1990), 180-199. (Russian).
            Translated in Proc. Steklov Inst. Math. 1991, no. 4, 217-239.
\bibitem{Suslin} A. Suslin,  On the Grayson spectral sequence, Tr. Mat. Inst. Steklova 241 (2003),
             Teor. Chisel, Algebra i Algebr. Geom., 218-253;
             translation in Proc. Steklov Inst. Math. 2003, no. 2(241), 202-237.
\bibitem{Suslin17} A. Suslin, Motivic complexes over nonperfect fields, Ann. K-theory 2(2) (2017), 277-302.
\bibitem{SV} A. Suslin, V. Voevodsky, Bloch--Kato conjecture and motivic cohomology with
         finite coefficients, The Arithmetic and Geometry of Algebraic Cycles
         (Banff, AB, 1998), NATO Sci. Ser. C Math. Phys. Sci., Vol. 548, Kluwer Acad. Publ., Dordrecht (2000), pp. 117-189.
\bibitem{Tot} B. Totaro, Milnor K-theory is the simplest part of algebraic K-theory, K-Theory 6(2) (1992), 177-189.
\bibitem{Voe} V. Voevodsky, Cohomological theory of presheaves with transfers,
         in Cycles, Transfers and Motivic Homology Theories (V. Voevodsky, A.
         Suslin and E. Friedlander, eds.), Ann. Math. Studies, Princeton Univ. Press, 2000.
\bibitem{Voe1} V. Voevodsky, Triangulated category of motives over a
         field, in Cycles, Transfers and Motivic Homology
         Theories (V. Voevodsky, A. Suslin and E. Friedlander, eds.), Annals of Math.
         Studies, Princeton University Press (2000).
\bibitem{V02} V.~Voevodsky, Motivic cohomology groups are isomorphic to higher Chow groups in
           any characteristic, Int. Math. Res. Not. IMRN 7 (2002), 351-355.
\bibitem{V10} V.~Voevodsky, Motivic cohomology with $\mathbf Z/l$-coefficients, Ann. of Math. (2) 174 (2011), 401-438.
\bibitem{V1} V.~Voevodsky, Cancellation theorem, Doc. Math. Extra Volume in honor of A. Suslin (2010), 671-685.
\bibitem{Wei} C. Weibel, The K-book: an introduction to algebraic K-theory, Graduate Studies in Math. vol. 145, American
           Mathematical Society, Providence, RI, 2013.
\end{thebibliography}
\end{document}